\DeclareMathOperator{\Aut}{\mathrm{Aut}}
\numberwithin{equation}{section}
\begin{document} 

\title{Finite groups acting on compact complex parallelizable manifolds}
\date{}
\author{Aleksei Golota}

\newtheorem{theorem}{Theorem}[section] 
\newtheorem*{ttheorem}{Theorem}
\newtheorem{lemma}[theorem]{Lemma}
\newtheorem{proposition}[theorem]{Proposition}
\newtheorem*{conjecture}{Conjecture}
\newtheorem{corollary}[theorem]{Corollary}

{\theoremstyle{remark}
\newtheorem{remark}[theorem]{Remark}
\newtheorem{example}[theorem]{Example}
\newtheorem{notation}[theorem]{Notation}
\newtheorem{question}[theorem]{Question}
}

\theoremstyle{definition}
\newtheorem{construction}[theorem]{Construction}
\newtheorem{definition}[theorem]{Definition}

\begin{abstract} We prove that the automorphism group of a compact complex parallelizable manifold is Jordan. In the course of the proof we show that outer automorphism groups of cocompact lattices in complex Lie groups have bounded finite subgroups.
\end{abstract}

\maketitle

\section{Introduction} 

The study of finite subgroups in the automorphism groups of compact complex (e. g. projective algebraic) manifolds has attracted a lot of attention in recent years. These subgroups often satisfy various boundedness properties, for example, the following property, introduced in \cite{Pop11}.

\begin{definition} \label{Jordan} Let $G$ be a group. We say that $G$ is {\em Jordan} (or has the {\em Jordan property}) if there is a constant $J(G) \in \mathbb{N}$ such that for any finite subgroup $H \subset G$ there is a normal abelian subgroup $A \unlhd H$ of index at most $J(G)$. 
\end{definition}

A classical result of C. Jordan (\cite[Chapter 2]{Jor78}, see e. g. \cite{Rob90} for a modern proof) says that this property holds for $G = \mathrm{GL}_n(\mathbb{C})$. Clearly, a subgroup of a Jordan group is also Jordan; therefore, all linear algebraic groups over $\mathbb{C}$ are Jordan. 

Let $X$ is a projective variety over an algebraically closed field $k$ of zero characteristic. Then it is well known (see e.g. \cite{Gro61}) that the group $\Aut(X)$ of biregular automorphisms of $X$ is an algebraic group scheme locally of finite type over $k$. There is an exact sequence \begin{equation}\label{autseq} 1 \to \Aut^0(X) \to \Aut(X) \to \Aut(X)/\Aut^0(X) \to 1,
\end{equation}
where $\Aut^0(X)$ is the connected component of the identity and $\Aut(X)/\Aut^0(X)$ is a countable discrete group. S. Meng and D.-Q. Zhang \cite[Theorem 1.6]{MZ18} have shown that in this case the group $\Aut(X)$ is Jordan. J. Kim \cite[Theorem 1.1]{Kim18} has extended their result to the groups of biholomorphic automorphisms of compact K\"ahler spaces. In fact, for every compact complex manifold $X$ one can consider the exact sequence \eqref{autseq}. The connected component $\Aut^0(X)$ is known to be a complex Lie group (see e. g. \cite{Akh95}). If $X$ is compact Kaehler then it follows from the results in \cite{Fuj78, Lie78} that the action of $\Aut(X)$ on $H^2(X, \mathbb{Z})$ (modulo torsion) yields a homomorphism $$\Aut(X)/\Aut^0(X) \to \mathrm{GL}_N(\mathbb{Z})$$ with finite kernel; this is essential for the argument in \cite{Kim18} to show that $\Aut(X)$ is Jordan. In general, not much is known about the group of connected components of $\Aut(X)$ for an arbitrary compact complex manifold $X$.  

For non-K\"ahler compact complex manifolds there are only a few known results on the Jordan property for automorphism groups. \begin{itemize} \item S. Meng, F. Perroni and D.-Q. Zhang \cite[Corollary 1.2]{MPZ22} have proved this property for the automorphism groups of compact complex manifolds of Fujiki's class $\mathcal{C}$ (bimeromorphic to compact K\"ahler manifolds).
\item Yu. Prokhorov and C. Shramov in \cite[Theorem 1.6]{PS21} have shown that the automorphism groups of compact complex surfaces are Jordan. \item For the automorphism groups of Hopf manifolds the Jordan property was shown by A. Savelyeva in \cite[Theorem 1.4]{Sav20}. \item For examples of non-K\"ahler holomorphically symplectic manifolds constructed by D. Guan in \cite[Theorem 1]{Gu95a} and \cite[Theorem 3]{Gu95b}, the Jordan property of the automorphism groups was proved by F. Bogomolov, N. Kurnosov, A. Kuznetsova and E. Yasinsky in \cite[Theorem ~C]{BKKY22}. \item A recent result by K. Loginov \cite[Theorem 1.2]{Log22} says, in particular, that the automorphism group of any compact complex manifold $X$ of Kodaira dimension at least $\dim(X) - 2$ is Jordan. \end{itemize}

In this paper we consider finite subgroups in the automorphism groups of compact complex manifolds which are holomorphically parallelizable.

\begin{definition} A connected complex manifold $X$ is called parallelizable if its holomorphic tangent bundle is trivial.   
\end{definition}

Compact manifolds in this class are isomorphic to quotients of complex Lie groups by discrete cocompact subgroups (\cite[Theorem 1]{Wa54}). Below are a few well-known examples of compact complex parallelizable manifolds.

\begin{example} By \cite[Corollary 2]{Wa54}, a compact complex parallelizable manifold $X$ is K\"ahler if and only if it is biholomorphic to a compact complex torus $T = \mathbb{C}^n/\Gamma$, where $\Gamma \simeq \mathbb{Z}^{2n}$. In this case the group of automorphisms of $X$ has an explicit description, namely, it is isomorphic to a semidirect product $$T \rtimes \{A \in \mathrm{GL}_n(\mathbb{C}) \mid A(\Gamma) = \Gamma\},$$ see e. g. \cite[~p. 42]{Akh95}.
\end{example}

\begin{example} Let $G$ be the group of upper-triangular matrices of the form $$\begin{pmatrix} 1 & x & z \\ 0 & 1 & y\\ 0 & 0 & 1\end{pmatrix}, x, y, z \in \mathbb{C}.$$ Consider the subgroup $\Gamma \subset G$ of matrices with entries $x, y$ and $z$ in $\mathbb{Z}[\sqrt{-1}]$. Then the quotient $X = G/\Gamma$ is a compact complex non-K\"ahler manifold, called the Iwasawa manifold (see e. g. \cite[1.4]{Win98}).
\end{example}

\begin{example} Consider the complex solvable Lie group $G = \mathbb{C}^2 \rtimes_{\varphi} \mathbb{C}$, where the action $\varphi \colon \mathbb{C} \to \mathrm{GL}_2(\mathbb{C})$ is defined by $$\varphi(x + \sqrt{-1}y) = \begin{pmatrix} e^x & 0\\ 0 & e^{-x}
\end{pmatrix}.$$ There exists $a \in \mathbb{R}\setminus\{0\}$ such that the matrix $$\begin{pmatrix} e^a & 0\\ 0 & e^{-a}
\end{pmatrix}$$ is conjugate to a matrix in $\mathrm{SL}_2(\mathbb{Z})$ (for example, one can simply take $a = \log 2$). There exists a lattice $\Gamma' \subset \mathbb{C}^2$ such that for any $b \in \mathbb{R}\setminus\{0\}$ the subgroup $$\Gamma = \Gamma' \rtimes_{\varphi} \{a\mathbb{Z} + \sqrt{-1}b\mathbb{Z}\} \subset \mathbb{C}^2 \rtimes_{\varphi} \mathbb{C}$$ is a lattice in $G$. The quotient $X = G/\Gamma$ is a compact complex parallelizable manifold, first constructed by I. Nakamura in \cite{Na75}.
\end{example}

For more examples of compact complex parallelizable manifolds we refer to \cite{Win98} and references therein.

Our main result is the following theorem.

\begin{theorem} \label{main} Let $X$ be a compact complex parallelizable manifold. Then the group $\Aut(X)$ is Jordan.
\end{theorem}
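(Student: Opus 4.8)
The plan is to reduce the statement to a boundedness result for the outer automorphism group of $\pi_1(X)$ together with an extension argument. By Wang's theorem I may pass to the universal cover and assume $X = G/\Gamma$, where $G$ is a simply connected complex Lie group and $\Gamma = \pi_1(X)$ is a cocompact lattice. The first step is to show that every holomorphic automorphism of $X$ is \emph{affine}. Since the holomorphic tangent bundle is trivialized by the left-invariant vector fields, one has $H^0(X, TX) \cong \mathfrak{g}$, and any biholomorphism $f$ must carry this framing to itself twisted by an induced linear automorphism $\rho(f) \in \mathrm{Aut}(\mathfrak{g})$. Lifting $f$ to $G$ and using that a diffeomorphism preserving the left-invariant framing up to a constant linear map has the form $g \mapsto a\,\phi(g)$ with $a \in G$ and $\phi \in \mathrm{Aut}(G)$, I obtain an identification $\Aut(X) \cong (G \rtimes \mathrm{Aut}(G,\Gamma))/\Gamma$, where $\mathrm{Aut}(G,\Gamma) = \{\phi \in \mathrm{Aut}(G) : \phi(\Gamma) = \Gamma\}$ and $\Gamma$ is embedded anti-diagonally. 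From this description I read off the exact sequence \eqref{autseq} with $\Aut^0(X) \cong G/(Z(G) \cap \Gamma)$ and $\Aut(X)/\Aut^0(X) \cong \mathrm{Aut}(G,\Gamma)/\mathrm{Inn}(\Gamma)$.

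Next I would treat the two ends of this sequence separately. The connected component $\Aut^0(X)$ is a connected complex Lie group; by the Cartan--Iwasawa--Malcev theorem every finite subgroup is conjugate into a maximal compact subgroup $K$, which is a connected compact Lie group and hence embeds into some $\mathrm{GL}_m(\mathbb{C})$. Jordan's classical theorem then shows that $K$, and therefore $\Aut^0(X)$, is Jordan, with a uniform constant since all maximal compact subgroups are conjugate. The group of components $\Aut(X)/\Aut^0(X) \cong \mathrm{Aut}(G,\Gamma)/\mathrm{Inn}(\Gamma)$ maps, by restriction of automorphisms to $\Gamma$, to $\mathrm{Out}(\Gamma)$; I would show this map has trivial or at worst bounded kernel, the crucial input being the rigidity statement that a holomorphic automorphism of $G$ fixing the lattice $\Gamma$ is the identity.

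The heart of the argument is then to prove that $\mathrm{Out}(\Gamma)$ has \emph{bounded finite subgroups}, i.e. that there is a uniform bound on the orders of its finite subgroups. Here I would use the structure theory of lattices: writing $R$ for the radical of $G$ and $S = G/R$ for the semisimple quotient, the intersection $\Gamma_R = \Gamma \cap R$ is a cocompact lattice in $R$ and $\Gamma_S = \Gamma/\Gamma_R$ a cocompact lattice in $S$, with $\Gamma_R$ a \emph{characteristic} subgroup of $\Gamma$ (the maximal normal polycyclic subgroup). Restriction to $\Gamma_R$ and passage to $\Gamma_S$ yield a homomorphism $\mathrm{Out}(\Gamma) \to \mathrm{Out}(\Gamma_R) \times \mathrm{Out}(\Gamma_S)$ whose factors I can control: $\Gamma_R$ is polycyclic, so its automorphism group is arithmetic and hence has bounded finite subgroups by Minkowski's theorem on finite subgroups of $\mathrm{GL}_n(\mathbb{Z})$; while $\Gamma_S$ is a lattice in a semisimple complex Lie group, so $\mathrm{Out}(\Gamma_S)$ is finite by Mostow rigidity. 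It then remains to bound the finite subgroups lying in the kernel of this homomorphism, namely outer automorphisms acting trivially on both $\Gamma_R$ and $\Gamma_S$; these are governed by the relevant extension cohomology and by the action on the graded pieces of a central series of $\Gamma_R$, each of which is a finitely generated abelian group on which finite symmetry is again bounded by Minkowski. Assembling these bounds, using that the class of groups with bounded finite subgroups is closed under extensions, gives the theorem.

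Finally I would combine the two halves. A finite subgroup $H \subset \Aut(X)$ meets $\Aut^0(X)$ in a finite subgroup $H_0$ with $[H:H_0] \le M$, where $M$ bounds the finite subgroups of $\mathrm{Out}(\Gamma)$; applying the Jordan property of $\Aut^0(X)$ to $H_0$ and taking the normal core in $H$ of the resulting abelian subgroup produces an abelian normal subgroup of $H$ of index bounded by $(M\,J(\Aut^0(X)))^{M}$. This is the standard lemma that an extension of a Jordan group by a group with bounded finite subgroups is Jordan, and it completes the argument. I expect the main obstacle to be the proof that $\mathrm{Out}(\Gamma)$ has bounded finite subgroups in the mixed, non-nilpotent and non-semisimple case: controlling the kernel of the restriction map to the characteristic factors $\Gamma_R$ and $\Gamma_S$ uniformly, and invoking the correct rigidity and arithmeticity inputs for the solvable part, is the delicate point, whereas the reduction to affine automorphisms, the Jordan property of $\Aut^0(X)$, and the concluding extension argument are comparatively routine.
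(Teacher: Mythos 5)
Your overall architecture coincides with the paper's: Wang's structure theorem, Winkelmann's description of $\Aut(X)$ as $(G \rtimes \Aut(G;\Gamma))/\tau(\Gamma)$, the embedding of $\Aut(X)/\Aut^0(X)$ into $\mathrm{Out}(\Gamma)$, the Jordan property of the connected complex Lie group $\Aut^0(X)$, the reduction of $\mathrm{Out}(\Gamma)$ along the Levi--Mal'cev decomposition using extension cohomology, Wehrfritz plus Minkowski for the polycyclic part, Mostow rigidity for the semisimple part, and the concluding extension lemma are all exactly the steps taken there. The gap is at the one point you yourself flag as delicate, and it is a genuine one: you assert that $\Gamma_R=\Gamma\cap R$ is a \emph{characteristic} subgroup of $\Gamma$, justified as ``the maximal normal polycyclic subgroup''. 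That characterization is false in general. The quotient $\Gamma_S=\Gamma/\Gamma_R$ is a cocompact lattice in the complex semisimple group $S$, and its centre is finite but may well be nontrivial (e.g.\ a lattice in $\mathrm{SL}_2(\mathbb{C})$ containing $-I$); the preimage in $\Gamma$ of $Z(\Gamma_S)$ is then a normal subgroup which is solvable and has every subgroup finitely generated, hence is polycyclic by Hirsch's criterion, and it strictly contains $\Gamma\cap R$. So $\Gamma\cap R$ is not maximal among normal polycyclic subgroups, and you give no other reason why an arbitrary automorphism of $\Gamma$ should preserve it. Without that, the restriction map $\mathrm{Out}(\Gamma)\to\mathrm{Out}(\Gamma_R)\times\mathrm{Out}(\Gamma_S)$ is not defined on all of $\mathrm{Out}(\Gamma)$ and the central reduction does not get started.

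The paper never claims $\Gamma\cap R$ is characteristic. It proves the weaker but sufficient statement that $\mathrm{Out}(\Gamma;\Gamma\cap R)$ has \emph{finite index} in $\mathrm{Out}(\Gamma)$ (Proposition \ref{finindex}), and this is where the substantive extra input enters: Weil's theorems that the space $\mathcal{R}(\Gamma_S,S)$ of cocompact embeddings has only finitely many connected components and that each component is a single $S$-conjugacy class. Comparing, for $\theta\in\Aut(\Gamma)$, the two induced embeddings of $\Gamma_S$ into $S$ and using this conjugacy forces $\theta(\Gamma\cap R)=\Gamma\cap R$ for all $\theta$ in a finite-index subgroup of $\Aut(\Gamma)$; one then finishes with Lemma \ref{bfsindex}. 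To repair your argument you must either supply this rigidity step, or replace $\Gamma\cap R$ by a genuinely characteristic subgroup --- for instance the preimage of $Z(\Gamma_S)$, which one can show \emph{is} the maximal normal polycyclic subgroup, but only after a Borel-density argument showing that every normal solvable subgroup of $\Gamma_S$ is central, and at the cost of re-running the Malfait analysis for the enlarged subgroup. The remaining imprecisions (that $\mathrm{Out}(\Gamma_S)$ is finite also when $Z(\Gamma_S)\neq 1$, which the paper handles in Section 5, and that the kernel of the restriction map is controlled by a finitely generated $H^1(\Gamma_S,Z(\Gamma\cap R))$) are fixable along the lines you indicate.
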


Let us fix the notation. For a group $G$ we denote by $\Aut(G)$ the group of automorphisms of $G$, by $$\mathrm{Inn}(G) = \{h \mapsto ghg^{-1} \mid g \in G\} \subset \Aut(G)$$ the group of inner automorphisms of $G$ and by $\mathrm{Out}(G) = \Aut(G)/\mathrm{Inn}(G)$ the group of outer automorphisms of $G$. Also, if $G$ is a group and $H \subset G$ is a subgroup, we denote by $$\Aut(G; H) = \{\varphi \in \Aut(G) \mid \varphi(H) = H\}$$ the group of automorphisms of $G$ which map the subgroup $H \subset G$ isomorphically onto itself. The center of a group $G$ is denoted by $Z(G)$. If $H \subset G$ is a subgroup then $C_G(H)$ and $N_G(H)$ denote the centralizer of $H$ in $G$ and the normalizer of $H$ in $G$, respectively. If $G$ is a Lie group, then $\Aut(G)$ (respectively, $\mathrm{Out}(G)$) denotes the group of automorphisms of $G$ (respectively, outer automorphisms of $G$) as a Lie group.

In the course of the proof of Theorem \ref{main} we establish boundedness of finite subgroups (see Definition \ref{bfs}) in the groups $\mathrm{Out}(\Gamma)$ for cocompact lattices $\Gamma$ in complex Lie groups.

\begin{theorem}\label{main1} Let $\Gamma$ be a cocompact lattice in a connected complex Lie group $G$. Then the group $\mathrm{Out}(\Gamma)$ has bounded finite subgroups.
\end{theorem}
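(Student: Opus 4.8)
The plan is to bound the finite subgroups of $\mathrm{Out}(\Gamma)$ by peeling off, one piece at a time, a canonical filtration of $\Gamma$ coming from its solvable radical, and controlling each graded piece by a separate rigidity or linearity input. Two elementary tools underlie everything. First, the property of having bounded finite subgroups passes to subgroups and is closed under extensions: if $1 \to A \to B \to C \to 1$ is exact and $A, C$ have bounded finite subgroups, then so does $B$, since for finite $F \le B$ one has $|F| = |F \cap A| \cdot |F/(F \cap A)|$ with $F \cap A \le A$ and $F/(F \cap A) \hookrightarrow C$. Second, Minkowski's theorem bounds the order of finite subgroups of $\mathrm{GL}_n(\mathbb{Z})$ (and, by restriction of scalars, of $\mathrm{GL}_n$ over any fixed number field), so any arithmetic group, and more generally any group admitting a faithful linear representation over $\mathbb{Q}$ of bounded dimension, has bounded finite subgroups.

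First I would fix the structure of $\Gamma$. Let $R \subseteq G$ be the radical, a connected normal solvable complex subgroup with $S := G/R$ complex semisimple, hence, as a real group, without compact factors. By the structure theory of lattices (Mostow, Raghunathan), $R\Gamma$ is closed, $\Gamma \cap R$ is a cocompact lattice in $R$, and the image of $\Gamma$ in $S$ is a cocompact lattice. Lattices in connected solvable Lie groups are polycyclic, so $\Gamma \cap R$ is polycyclic; moreover the solvable radical $N := \mathrm{Rad}(\Gamma)$ (the maximal normal solvable subgroup) is characteristic in $\Gamma$, contains $\Gamma \cap R$ with finite index (a normal solvable subgroup of a lattice in $S$ is finite, by Borel density), and is therefore itself polycyclic. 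The quotient $Q := \Gamma/N$ is, up to a finite normal subgroup, a cocompact lattice in $S$; in particular $Z(Q)$ is finite. Since $N$ is characteristic, every automorphism of $\Gamma$ descends to $Q$ and sends inner automorphisms to inner automorphisms, giving a homomorphism $\mathrm{Out}(\Gamma) \to \mathrm{Out}(Q)$. Complex semisimple Lie groups are not locally isomorphic to $\mathrm{PSL}_2(\mathbb{R})$ and have no compact factors, so Mostow strong rigidity applies to $Q$ and shows $\mathrm{Out}(Q)$ is finite. By the extension lemma it then suffices to bound finite subgroups of the kernel $K$ of $\mathrm{Out}(\Gamma) \to \mathrm{Out}(Q)$.

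To treat $K$ I would analyse the restriction of automorphisms to $N$ along the extension $1 \to N \to \Gamma \to Q \to 1$ by means of Wells' exact sequence for the automorphisms of a group extension. This expresses $K$, up to the elementary tools above, through two pieces: a subquotient of $\mathrm{Out}(N)$ and the cohomology group $H^1(Q; Z(N))$. The first piece has bounded finite subgroups because $N$ is polycyclic-by-finite and, by the theorem of Baues and Grunewald, $\mathrm{Out}(N)$ is then an arithmetic group, to which Minkowski's theorem applies. The second piece is finitely generated abelian: $Z(N)$ is a finitely generated abelian group and $Q$ is finitely generated, so the cocycle group $Z^1(Q; Z(N))$, and hence $H^1(Q; Z(N))$, is finitely generated abelian, whence its torsion is finite and its finite subgroups are bounded. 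Assembling these through the extension lemma bounds the finite subgroups of $K$, and with the previous step, of $\mathrm{Out}(\Gamma)$.

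The main obstacle is the gluing in this last step. The naive restriction map $[\varphi] \mapsto [\varphi|_N]$ is \emph{not} well defined on $\mathrm{Out}(\Gamma)$, because an inner automorphism of $\Gamma$ restricts on $N$ to the value at $\bar g \in Q$ of the outer action $Q \to \mathrm{Out}(N)$, which is generally nontrivial, and the image of this action need not be normal in $\mathrm{Out}(N)$. Making the reduction precise requires the bookkeeping of Wells' sequence together with the finiteness of $Z(Q)$ and of $\mathrm{Out}(Q)$ to keep the discrepancy bounded. The remaining substantial inputs, which I would invoke rather than reprove, are the arithmeticity of $\mathrm{Out}(N)$ for polycyclic-by-finite $N$ and Mostow rigidity for the semisimple quotient; the structural reduction to these two regimes via the radical, and the observation that the cohomological correction term is finitely generated abelian, are what let the elementary extension lemma carry the argument to its conclusion.
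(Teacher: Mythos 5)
Your proposal is correct and shares the paper's overall skeleton (Levi--Mal'cev decomposition, Mostow rigidity for the semisimple quotient, linearity/arithmeticity of $\mathrm{Out}$ of a polycyclic group plus Minkowski for the solvable part, a Wells--Malfait sequence to glue, and closure of bounded finite subgroups under extensions), but it diverges at the one genuinely delicate point, and in an interesting way. The paper works with the normal subgroup $\Gamma \cap R$, which is \emph{not} characteristic in $\Gamma$; to repair this it devotes Section 6 (Proposition \ref{finindex}) to showing that $\mathrm{Out}(\Gamma;\Gamma\cap R)$ has finite index in $\mathrm{Out}(\Gamma)$, using Weil's local rigidity together with the finiteness of the set of connected components of the representation variety $\mathcal{R}(\Gamma/(\Gamma\cap R),S)$. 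You instead pass to the solvable radical $N=\mathrm{Rad}(\Gamma)$, which is characteristic, so every automorphism of $\Gamma$ preserves it and no finite-index reduction is needed. This works: by the density theorem, any normal solvable subgroup of the lattice $\Gamma/(\Gamma\cap R)\subset S$ has solvable, normal, hence central and finite Zariski closure, so all normal solvable subgroups of $\Gamma$ lie in the preimage of the (finite) center of $\Gamma/(\Gamma\cap R)$; that preimage is itself normal and solvable, so it \emph{is} the unique maximal normal solvable subgroup, it contains $\Gamma\cap R$ with finite index, and it is polycyclic since it is solvable with a polycyclic subgroup of finite index. Your route buys a real simplification --- the deformation-theoretic input (Theorems \ref{weil} and \ref{finconn}) is eliminated in favour of Borel/Mostow density, which the paper already uses elsewhere, and your $Q=\Gamma/N$ is honestly a lattice in the centerless group $S/Z(S)$, so Mostow applies without the extra reduction of Theorem \ref{outgamma}. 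The cost is that the existence, characteristicity, polycyclicity and finite index of $\mathrm{Rad}(\Gamma)$ over $\Gamma\cap R$ are asserted rather than proved; these should be spelled out as above, but none of them is problematic. The remaining differences (Baues--Grunewald arithmeticity versus Wehrfritz's embedding $\mathrm{Out}(N)\hookrightarrow\mathrm{GL}_r(\mathbb{Z})$; Wells versus Malfait's refinement) are immaterial, and you correctly identify that the restriction map to $\mathrm{Out}(N)$ is only defined up to the outer action of $Q$, which is exactly what Malfait's sequence \eqref{malfait2} packages.
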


Let us outline the structure of the paper. In Section 2 we collect some preliminaries from group theory, including boundedness of finite subgroups and Jordan property. We also discuss group extensions and their automorphisms. In Section 3 we discuss group-theoretic properties of lattices in complex Lie groups. Section 4 is devoted to description of the structure of compact complex parallelizable manifolds, including their automorphism groups, following \cite{Win98}. Section 5 is expository; we present there a detailed proof of well-known finiteness of $\mathrm{Out}(\Gamma)$ for a cocompact lattice $\Gamma$ in a complex semisimple Lie group $G$. In Section 6 we discuss embeddings of lattices to Lie groups and some applications of rigidity to subgroups of lattices. Finally, in Section 7 we prove Theorem \ref{main1} and derive Theorem \ref{main} from this result. 

\textbf{Acknowledgement.} The author would like to thank his advisor Constantin Shramov for suggesting this problem and for his constant support. The author also thanks A. Klyachko and D. Tereshkin for useful conversations on group theory, Y. de Cornulier for answering his question on Mathoverflow and D. Timashev and the anonymous referee for helpful remarks. This work was performed at the Steklov International Mathematical Center and supported by the Ministry of Science and Higher Education of the Russian Federation (agreement no. 075-15-2022-265) and by the Basic Research Program of the National Research University Higher School of Economics.

\section{Preliminaries}

We collect here a few definitions and facts on Jordan groups, their quotients and extensions. First, we quote a generalization of the Jordan theorem to connected Lie groups from \cite[Theorem 1]{BW65} (see also \cite[Theorem 2]{Pop18}).

\begin{theorem}\label{liejor} Let $G$ be a connected real Lie group. Then it is Jordan.
\end{theorem}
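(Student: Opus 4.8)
The plan is to reduce the statement to the classical Jordan theorem for $\mathrm{GL}_n(\mathbb{C})$, which is available to us, by means of the theory of maximal compact subgroups. The argument is essentially that of Boothby--Wang.

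First I would invoke the Cartan--Iwasawa--Malcev theorem: a connected Lie group $G$ admits a maximal compact subgroup $K$, which is itself connected, and every compact subgroup of $G$ is conjugate by an element of $G$ to a subgroup of $K$. Since any finite subgroup $H \subset G$ is compact, there exists $g \in G$ with $gHg^{-1} \subset K$. Next I would observe that the Jordan property for $G$ follows from that for $K$ with the same constant. Indeed, conjugation by $g$ is an automorphism of $G$ carrying $H$ isomorphically onto $H' := gHg^{-1} \subset K$; a normal abelian subgroup $A' \unlhd H'$ of index at most $J(K)$ then pulls back under this isomorphism to a normal abelian subgroup of $H$ of the same index. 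Hence it suffices to show that the compact connected group $K$ is Jordan, and moreover $J(G) \le J(K)$.

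Finally, by the Peter--Weyl theorem the compact Lie group $K$ admits a faithful finite-dimensional unitary representation, that is, it embeds as a closed subgroup into $\mathrm{U}(n) \subset \mathrm{GL}_n(\mathbb{C})$ for some $n$. By the classical theorem of Jordan the group $\mathrm{GL}_n(\mathbb{C})$ is Jordan, and since a subgroup of a Jordan group is again Jordan, we conclude that $K$, and therefore $G$, is Jordan.

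The only genuinely nontrivial ingredient is the existence-and-conjugacy statement for compact subgroups in the first step: this is precisely where the connectedness hypothesis on $G$ enters, and it is the crux of the reduction. The remaining steps — transport of the Jordan constant under conjugation and the linearity of compact Lie groups via Peter--Weyl — are formal, so I expect the conjugacy of maximal compact subgroups to be the load-bearing part of the proof.
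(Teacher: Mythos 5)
Your argument is correct and complete: the reduction of a finite subgroup to the maximal compact subgroup via the Cartan--Iwasawa--Malcev conjugacy theorem, followed by Peter--Weyl and the classical Jordan theorem for $\mathrm{GL}_n(\mathbb{C})$, is exactly the standard proof. The paper does not prove this statement itself but quotes it from Boothby--Wang and Popov, whose arguments proceed along precisely these lines, so your proposal matches the intended proof.
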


Note that the quotient groups and extensions of Jordan groups are not necessarily Jordan. To overcome this difficulty it is helpful to consider a more restrictive property of groups (\cite[Definition 2.7]{Pop11}).

\begin{definition} \label{bfs} We say that a group $G$ has {\em bounded finite subgroups} if there is a number $B(G) \in \mathbb{N}$ such that for any finite subgroup $H \subset G$ we have $|H| \leqslant B(G)$. 
\end{definition}

\begin{example} \label{abelianbfs} If $G$ is a finitely generated abelian group then by the structure theorem it is isomorphic to a direct sum of a free abelian group and the (finite) torsion subgroup. In particular, $G$ has bounded finite subgroups.
\end{example}

\begin{remark} There exist finitely generated groups with unbounded finite subgroups. For a simple example, let $G$ be the group of permutations of $\mathbb{Z}$ which are shifts up to permutations of finite subsets of $\mathbb{Z}$. Then $G$ is generated by a shift $n \mapsto n+1$ and a transposition interchanging 0 and 1. Moreover, $G$ contains $S_n$ as a subgroup for every $n \in \mathbb{N}$, so it has unbounded finite subgroups. Since the center of $G$ is trivial, $G$ embeds to $\Aut(G)$, which therefore has unbounded finite subgroups. For an example of a finitely presented group with unbounded finite subgroups one may take e. g. Thompson's group $V$.
\end{remark}

\begin{remark}\label{quotbfs} In general, the property of having bounded finite subgroups is not preserved under taking quotients. However, the quotient of a group with bounded finite subgroups by a finite normal subgroup has bounded finite subgroups. 
\end{remark}

Another simple but useful observation is the following.

\begin{lemma}\label{bfsindex} Let $G$ be a group and let $F \subset G$ be a subgroup of finite index. Then $F$ has bounded finite subgroups if and only if $G$ does.
\end{lemma}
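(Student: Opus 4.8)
The plan is to prove the two implications separately; the forward direction is immediate, while the reverse direction follows from a counting argument on cosets.

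For the forward direction I would argue that if $G$ has bounded finite subgroups with bound $B(G)$, then every finite subgroup $H \subset F$ is in particular a finite subgroup of $G$, so $|H| \leqslant B(G)$; hence $F$ has bounded finite subgroups.

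For the reverse direction, assume $F$ has bounded finite subgroups with bound $B(F)$ and set $n = [G:F]$. Given an arbitrary finite subgroup $H \subset G$, the goal is to bound $|H|$. The key step is the estimate $[H : H \cap F] \leqslant n$, which I would obtain by letting $H$ act by left multiplication on the finite set $G/F$ of left cosets: the stabilizer of the coset $eF$ is exactly $H \cap F$, so by the orbit--stabilizer theorem the index $[H : H \cap F]$ equals the size of the orbit of $eF$ and is therefore at most $|G/F| = n$. Since $H \cap F$ is a finite subgroup of $F$, we have $|H \cap F| \leqslant B(F)$, whence
$$|H| = [H : H \cap F] \cdot |H \cap F| \leqslant n \cdot B(F).$$
As $H$ was arbitrary, this shows that $G$ has bounded finite subgroups with bound $n \cdot B(F)$.

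I do not anticipate a genuine obstacle here. The only point demanding a little care is that $F$ need not be normal in $G$, so one cannot form the quotient group $G/F$; the orbit--stabilizer argument sidesteps this by treating $G/F$ merely as a finite $H$-set rather than as a group.
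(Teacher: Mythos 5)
Your proof is correct and follows essentially the same route as the paper: the key inequality $|H| \leqslant |H \cap F|\cdot[G:F]$ is exactly the one used there, with your orbit--stabilizer argument merely making explicit a step the paper leaves implicit, and your treatment of the easy direction filling in what the paper omits as obvious.
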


\begin{proof} Let $H \subset G$ be a finite subgroup. Then the intersection $F \cap H$ is a finite subgroup of $F$, so by assumption there is a constant $B$ such that $|F \cap H| \leqslant B$. Then we have $$|H| \leqslant |H \cap F|\cdot[G: F] = B\cdot[G: F],$$ so $G$ also has bounded finite subgroups. 
\end{proof}

More nontrivial examples of groups with bounded finite subgroups are given by a classical theorem of H. Minkowski (see e.g. \cite[Theorem 1]{Ser07}). 

\begin{theorem}\label{Minkowski} The orders of finite subgroups in $G = \mathrm{GL}_n(\mathbb{Z})$ are bounded by a natural number $M(n)$ depending on $n$ only.
\end{theorem}

An easy but important result below (see \cite[Lemma 2.9]{Pop11}) says that an extension of a group with bounded finite subgroups by a Jordan group is Jordan as well.

\begin{proposition}\label{Extensions} Consider an exact sequence of groups $$ 1 \to G_1 \to G_2 \to G_3 \to 1.$$ Suppose that $G_1$ is Jordan and $G_3$ has bounded finite subgroups. Then $G_2$ is Jordan.
\end{proposition}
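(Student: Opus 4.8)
The plan is to control a finite subgroup $H \subseteq G_2$ through its intersection with $G_1$ and its image in $G_3$, and then to repair the failure of normality by passing to a core. Write $\pi \colon G_2 \to G_3$ for the projection in the given sequence. First I would set $H_1 = H \cap G_1$; this is a finite subgroup of $G_1$, and it is normal in $H$ because $G_1$ is normal in $G_2$. The image $\pi(H) \subseteq G_3$ is a finite subgroup, so by the bounded-finite-subgroups hypothesis $|\pi(H)| \leqslant B(G_3)$; since $H/H_1 \cong \pi(H)$, this yields $[H : H_1] \leqslant B(G_3)$.

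Next I would feed $H_1$ into the Jordan property of $G_1$: there is a normal abelian subgroup $A_1 \unlhd H_1$ with $[H_1 : A_1] \leqslant J(G_1)$. Multiplying the two index bounds, $A_1$ is then a subgroup of $H$ \emph{itself} of index
$$[H : A_1] = [H : H_1]\,[H_1 : A_1] \leqslant B(G_3)\,J(G_1) =: N.$$

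The one obstacle is that $A_1$, although abelian and normal in $H_1$, need not be normal in all of $H$, so it is not yet the subgroup demanded by the Jordan property. To fix this I would replace $A_1$ by its core $A = \bigcap_{h \in H} h A_1 h^{-1}$ in $H$. As a subgroup of the abelian group $A_1$ it is abelian, and as the kernel of the permutation action of $H$ on the coset space $H/A_1$ it is normal in $H$. The crucial point is that the index of $A_1$ in $H$ (not merely in $H_1$) is bounded by $N$, so this permutation representation lands in $\Sym(H/A_1)$, a group of order $[H:A_1]!$; hence $[H : A]$ divides $[H:A_1]! \leqslant N!$, whence $[H:A] \leqslant N!$. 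Therefore every finite $H \subseteq G_2$ contains a normal abelian subgroup of index at most $(B(G_3)\,J(G_1))!$, a constant independent of $H$, and so $G_2$ is Jordan with $J(G_2) \leqslant (B(G_3)\,J(G_1))!$.
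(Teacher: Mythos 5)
Your argument is correct: the intersection $H\cap G_1$ controls the bounded part, the Jordan property of $G_1$ supplies the abelian subgroup $A_1$ of index at most $B(G_3)J(G_1)$ in $H$, and passing to the normal core $\bigcap_{h\in H}hA_1h^{-1}$ legitimately repairs normality at the cost of a factorial in the bound. The paper does not prove this statement itself but cites it as Lemma 2.9 of Popov's article, and the standard proof given there proceeds along essentially these same lines, so there is nothing to object to.
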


Suppose that $G$ is a Jordan group and let $H \unlhd G$ be a normal subgroup. One may ask if the quotient group $G/H$ is Jordan provided that the subgroup $H$ is ``nice'' in a certain sense. This is clearly the case if $H$ is finite (see \cite[Lemma 2.5]{Pop11}). The example below shows that in general it is not the case, even if $H$ is infinite cyclic.

\begin{example}
Here is an example of a Jordan group $G$ containing a normal subgroup $H \simeq \mathbb{Z}$ such that the quotient group $G' = G/H$ is not Jordan. Let $p > 2$ be a prime number and consider the semidirect product $$G_p = (\mathbb{Z}/p\mathbb{Z})^p \rtimes_{\varphi} \mathbb{Z},$$ where the generator of $\mathbb{Z}$ is mapped by the homomorphism $\varphi \colon \mathbb{Z} \to \mathrm{Aut}((\mathbb{Z}/p\mathbb{Z})^p)$ to a cyclic permutation of coordinates in $(\mathbb{Z}/p\mathbb{Z})^p$, viewed as a vector space over $\mathbb{Z}/p\mathbb{Z}$. Then the subgroup $$H_p = p\mathbb{Z} \subset \mathbb{Z}$$ acts trivially on $(\mathbb{Z}/p\mathbb{Z})^p$. Let $i_p \colon \mathbb{Z} \to G_p$ be the inclusion of $H_p$. We construct the group $$G = \Asterisk_{\mathbb{Z}}G_p$$ as the free product of $G_p$ for all $p > 2$ with amalgamated subgroups $H_p \simeq \mathbb{Z}$ embedded via $i_p \colon \mathbb{Z} \to G_p$. Denote by $H \subset G$ the image of all subgroups $H_p$; it is a normal infinite cyclic subgroup of $G$. Note that every finite subgroup of $G_p$ is abelian, since it is contained in the kernel of the quotient map $$G_p \to G_p/(\mathbb{Z}/p\mathbb{Z})^p \simeq \mathbb{Z}.$$ From a generalization of \cite[Theorem I.4.8]{Ser80} to graphs of groups it follows that every finite subgroup of $G$ is conjugate to a finite subgroup of one of $G_p$; therefore, it is also abelian. Hence, the group $G$ is Jordan with $J(G) = 1$. On the other hand, the quotient $G' = G/H$ is isomorphic to the free product of quotient groups $$G'_p = G_p/H_p \simeq (\mathbb{Z}/p\mathbb{Z})^p \rtimes_{\varphi} \mathbb{Z}/p\mathbb{Z}.$$ Therefore, for every $p > 2$ there exists a subgroup $G'_p \subset G'$ such that the minimal index of a normal abelian subgroup $A \unlhd G'_p$ is at least $p$. This means that $G'$ is not Jordan.
\end{example}

\begin{remark} In the above example the groups $G$ and $G'$ are not finitely generated. It would be interesting to construct counterexamples of this kind with $G$ finitely generated and $H \simeq \mathbb{Z}$, or to prove that they do not exist.   
\end{remark}

Let $F$ be a group and let $$1 \to K \to F \to Q \to 1$$ be an exact sequence of groups. Also let $\psi \colon Q \to \mathrm{Out}(K)$ the corresponding homomorphism. In \cite{Wel71} C. Wells initiated a study of the group $\Aut(F; K)$ by cohomological methods. He constructed an exact sequence $$1 \to Z_{\psi}^1(Q, Z(K)) \to \Aut(F; K) \to \Aut(F) \times \Aut(K),$$ where $Z_{\psi}^1(Q, Z(K))$ is the group of 1-cocycles.

In \cite{Mal02} W. Malfait constructed several exact sequences for the group $$\mathrm{Out}(F; K) = \Aut(F; K)/\mathrm{Inn}(F)$$ elaborating on the construction of Wells (see \cite[Theorem 4.10]{Mal02}). We state here (partial versions of) some of his results, which we use later. The theorem below is used in the proof of Theorem \ref{outgamma}; for the proof see \cite[Theorem 3.10]{Mal02}.

\begin{theorem}\label{malfait11} Consider an exact sequence of groups
\begin{equation*}
\xymatrix{
1 \ar[r] & K \ar[r] & F \ar[r]^{j} & Q \ar[r] & 1.\\
}
\end{equation*}
Denote by $\psi \colon Q \to \mathrm{Out}(K)$ the corresponding natural homomorphism. Let us denote by $${\overline B}^1_{\psi}(Q, Z(K)) = \frac{C_F(K) \cap j^{-1}(Z(Q))}{Z(F)}$$ the subgroup of inner automorphisms of $F$ inducing the identity on both $K$ and $Q$, and by $$ \overline{H}_{\psi}^1(Q, Z(K)) = Z^1_{\psi}(Q, Z(K))/\overline{B}^1_{\psi}(Q, Z(K))$$ the quotient group. Then there exists an exact sequence of groups
\begin{equation*} 1 \to {\overline H}^1(Q, Z(K)) \to \mathrm{Out}(F; K) \to \frac{\Aut(K) \times \Aut(Q)}{F/(C_F(K) \cap j^{-1}(Z(Q)))}.
\end{equation*}
Moreover, the group $\overline{H}^1_{\psi}(Q, Z(K))$ is a quotient of the first cohomology group $H^1_{\psi}(Q, Z(K))$.
\end{theorem}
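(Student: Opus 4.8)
The plan is to obtain the stated sequence by descending the Wells exact sequence to outer automorphisms via a diagram chase, so I would first reconstruct Wells's sequence and identify its kernel. Any $\varphi \in \Aut(F;K)$ restricts to $\varphi|_K \in \Aut(K)$ and, since $\varphi(K)=K$, descends to an automorphism $\bar\varphi \in \Aut(Q)$ of $Q = F/K$; this gives a homomorphism $\rho \colon \Aut(F;K) \to \Aut(K)\times\Aut(Q)$, $\varphi \mapsto (\varphi|_K,\bar\varphi)$. I would identify $\ker\rho$ with $Z^1_\psi(Q,Z(K))$, where $Q$ acts on $Z(K)$ through $\psi$ (well defined, since inner automorphisms act trivially on the center). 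Indeed, if $\varphi$ induces the identity on both $K$ and $Q$, then $\delta(f) := \varphi(f)f^{-1} \in K$ is trivial on $K$, so it factors through $j$ as a map $\gamma\colon Q \to K$; expanding $\varphi(kf)=\varphi(k)\varphi(f)$ forces $\gamma(q)\in Z(K)$, and multiplicativity of $\varphi$ yields the cocycle identity $\gamma(q_1q_2)=\gamma(q_1)+q_1\cdot\gamma(q_2)$. Conversely every such cocycle defines an element of $\ker\rho$, which recovers Wells's sequence.

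Next I would locate $\mathrm{Inn}(F)$, which lies in $\Aut(F;K)$ because $K\unlhd F$. For $c_f\colon x\mapsto fxf^{-1}$ one has $\rho(c_f)=(c_f|_K, c_{j(f)})$, so $c_f\in\ker\rho$ iff $f$ centralizes $K$ and $j(f)\in Z(Q)$, i.e. $f\in C_F(K)\cap j^{-1}(Z(Q))$; moreover $c_f=\mathrm{id}$ iff $f\in Z(F)$. Hence $\mathrm{Inn}(F)\cap Z^1_\psi(Q,Z(K))$ is exactly $\overline{B}^1_\psi(Q,Z(K))=(C_F(K)\cap j^{-1}(Z(Q)))/Z(F)$, while $\rho(\mathrm{Inn}(F))\cong F/(C_F(K)\cap j^{-1}(Z(Q)))$ is the stated denominator. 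Quotienting by $\mathrm{Inn}(F)$, I would then verify exactness of
$$1 \to \overline{H}^1_\psi(Q,Z(K)) \to \mathrm{Out}(F;K) \to \frac{\Aut(K)\times\Aut(Q)}{F/(C_F(K)\cap j^{-1}(Z(Q)))}$$
directly: an outer class $[\varphi]$ dies in the quotient iff $\rho(\varphi)\in\rho(\mathrm{Inn}(F))$, iff $\varphi\,c_f^{-1}\in\ker\rho=Z^1_\psi$ for some $f$, i.e. iff $[\varphi]$ lies in the image of $Z^1_\psi$; by the previous step that image is $Z^1_\psi/\overline{B}^1_\psi=\overline{H}^1_\psi$, and the same computation gives injectivity of $\overline{H}^1_\psi\to\mathrm{Out}(F;K)$. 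Since $\rho(\mathrm{Inn}(F))$ need not be normal in $\Aut(K)\times\Aut(Q)$, I would read the final term as a set of cosets and interpret exactness at $\mathrm{Out}(F;K)$ in the pointed-set sense, namely that the fibre over the trivial coset equals the image of $\overline{H}^1_\psi$.

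For the ``moreover'' part, it suffices to show $B^1_\psi(Q,Z(K))\subseteq\overline{B}^1_\psi(Q,Z(K))$, so that $\overline{H}^1_\psi=Z^1_\psi/\overline{B}^1_\psi$ is a quotient of $H^1_\psi=Z^1_\psi/B^1_\psi$. A coboundary, say $q\mapsto z-q\cdot z$ for some $z\in Z(K)$, is precisely the cocycle attached to the inner automorphism $c_z$; since $z\in Z(K)\subseteq C_F(K)$ and $j(z)=1\in Z(Q)$, we have $c_z\in\overline{B}^1_\psi$. This gives the desired inclusion and hence the surjection $H^1_\psi(Q,Z(K))\twoheadrightarrow\overline{H}^1_\psi(Q,Z(K))$.

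The main obstacle is the careful bookkeeping in the first two steps: verifying that $\ker\rho$ is \emph{exactly} the cocycle group rather than merely contained in it, and correctly pinning down the two subgroups $C_F(K)\cap j^{-1}(Z(Q))$ and $Z(F)$ that produce $\overline{B}^1_\psi$ and the denominator. The descent to $\mathrm{Out}(F;K)$ and the comparison with $H^1_\psi$ are then routine diagram chases, with the only subtlety being the non-normality of $\rho(\mathrm{Inn}(F))$, which is handled by working with pointed sets at the rightmost term.
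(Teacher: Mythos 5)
Your proposal is correct. Note that the paper does not prove this statement itself but cites \cite[Theorem 3.10]{Mal02}, and your argument is essentially the standard Wells--Malfait derivation used there: identify the kernel of $\rho\colon\Aut(F;K)\to\Aut(K)\times\Aut(Q)$ with $Z^1_{\psi}(Q,Z(K))$, compute $\mathrm{Inn}(F)\cap\ker\rho$ and $\rho(\mathrm{Inn}(F))$, and pass to the quotient by $\mathrm{Inn}(F)$. Your observation that $\rho(\mathrm{Inn}(F))$ need not be normal in $\Aut(K)\times\Aut(Q)$, so that exactness at $\mathrm{Out}(F;K)$ must be read in the pointed-set sense, is a legitimate and appropriately handled subtlety (and is harmless for the way the theorem is used in the paper, where only the kernel term matters).
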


The next theorem is used in the proof of Theorem \ref{main1}; for the proof see \cite[Theorem 4.8]{Mal02}.

\begin{theorem} \label{malfait} Let $1 \to K \to F \to Q \to 1$ be an exact sequence of groups. Denote by $\psi \colon Q \to \mathrm{Out}(K)$ the corresponding natural homomorphism. Consider the homomorphism $$B \colon \Aut(F; K) \to \Aut(Q).$$ Then there exists an exact sequence of groups 
\begin{equation}\label{malfait1} 1 \to \Lambda_B \to \mathrm{Out}(F; K) \to \frac{\mathrm{Im}(B)}{\mathrm{Inn}(Q)} \to 1.
\end{equation}
In the above sequence the quotient group $\mathrm{Im}(B)/\mathrm{Inn}(Q)$ embeds to $\mathrm{Out}(Q)$. Moreover, the group $\Lambda_B$ can be expressed via an exact sequence 
\begin{equation}\label{malfait2} 1 \to \overline{H}_{\psi}^1(Q, Z(K)) \to \Lambda_B \to \Xi \to 1,
\end{equation}
where $\Xi$ is a subgroup of $\mathrm{Out}(K)/\psi(Z(Q))$.
\end{theorem}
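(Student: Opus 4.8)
The plan is to analyze the two natural homomorphisms out of $\Aut(F; K)$: the restriction $A \colon \Aut(F; K) \to \Aut(K)$, $\varphi \mapsto \varphi|_K$, and the induced map $B \colon \Aut(F; K) \to \Aut(Q)$ on the quotient $Q = F/K$. Both are well defined because $\varphi$ preserves $K$. Since $K \unlhd F$, every inner automorphism of $F$ preserves $K$, so $\mathrm{Inn}(F) \subseteq \Aut(F; K)$, and being normal in $\Aut(F)$ it is normal in $\Aut(F; K)$; this makes $\mathrm{Out}(F; K) = \Aut(F; K)/\mathrm{Inn}(F)$ a group. To produce the sequence \eqref{malfait1}, write $c_g$ for conjugation by $g \in F$ and observe that $B(c_g) = c_{j(g)}$, so $B(\mathrm{Inn}(F)) = \mathrm{Inn}(Q)$ because $j$ is surjective; in particular $\mathrm{Inn}(Q) \subseteq \mathrm{Im}(B)$. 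Hence $B$ descends to a surjection $\mathrm{Out}(F; K) \to \mathrm{Im}(B)/\mathrm{Inn}(Q)$, and I would define $\Lambda_B$ to be its kernel, which is precisely $B^{-1}(\mathrm{Inn}(Q))/\mathrm{Inn}(F)$. The asserted embedding $\mathrm{Im}(B)/\mathrm{Inn}(Q) \hookrightarrow \mathrm{Out}(Q)$ is then immediate from $\mathrm{Inn}(Q) \unlhd \Aut(Q)$.

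For the finer sequence \eqref{malfait2}, the first step is a normalization: if $B(\varphi) = c_q$ with $q = j(g)$, then $c_g^{-1}\varphi$ lies in $\ker(B)$, so every class in $\Lambda_B$ has a representative inducing the identity on $Q$. Writing $\mathrm{Aut}_Q(F; K) = \ker(B)$, this yields $\Lambda_B = \mathrm{Aut}_Q(F; K)/\bigl(\mathrm{Aut}_Q(F; K) \cap \mathrm{Inn}(F)\bigr)$, where the denominator is $\{c_g : j(g) \in Z(Q)\}$. I would then restrict $A$ to $\mathrm{Aut}_Q(F; K)$ and compose with the projection $\Aut(K) \to \mathrm{Out}(K)$. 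Using $[c_g|_K] = \psi(j(g))$, the image of $\{c_g : j(g) \in Z(Q)\}$ in $\mathrm{Out}(K)$ is exactly $\psi(Z(Q))$, so $A$ descends to a homomorphism $\Lambda_B \to \mathrm{Out}(K)/\psi(Z(Q))$; I define $\Xi$ to be its image, which is the required subgroup of $\mathrm{Out}(K)/\psi(Z(Q))$.

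It remains to identify the kernel of $\Lambda_B \to \Xi$ with $\overline{H}^1_\psi(Q, Z(K))$. A class lies in this kernel iff some representative $\varphi$ induces the identity on $Q$ and has $\varphi|_K \in \mathrm{Inn}(K)$; multiplying by a suitable $c_{k_0}$ with $k_0 \in K$ (which lies in the denominator above), I may assume $\varphi$ induces the identity on both $K$ and $Q$. The crux is the classical correspondence between such automorphisms and $Z^1_\psi(Q, Z(K))$: setting $\delta(x) = x^{-1}\varphi(x)$ gives a map $F \to K$, and the key point is that normality of $K$ forces $\delta(x) \in Z(K)$ for every $x$. Indeed, applying $\varphi$ to $xkx^{-1} \in K$ and using $\varphi|_K = \mathrm{id}_K$ yields $xkx^{-1} = \varphi(x)\,k\,\varphi(x)^{-1}$ for all $k \in K$, whence $\delta(x)$ centralizes $K$. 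One then checks that $\delta$ descends to $Q$ and satisfies the $1$-cocycle identity for the $\psi$-action on $Z(K)$, giving $\mathrm{Aut}_{K,Q}(F) \simeq Z^1_\psi(Q, Z(K))$, where $\mathrm{Aut}_{K,Q}(F)$ denotes the automorphisms trivial on both $K$ and $Q$. Under this identification the inner automorphisms $c_g$ with $g \in C_F(K) \cap j^{-1}(Z(Q))$ (those in the denominator that also restrict trivially to $K$) correspond, modulo $Z(F) = \ker(g \mapsto c_g)$, to the coboundary subgroup $\overline{B}^1_\psi(Q, Z(K)) = \bigl(C_F(K) \cap j^{-1}(Z(Q))\bigr)/Z(F)$, so the kernel is the quotient $\overline{H}^1_\psi(Q, Z(K))$, yielding \eqref{malfait2}.

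The main obstacle I anticipate is not any single computation but the careful bookkeeping of which inner automorphisms land in each of the nested subgroups $\ker(B)$, $C_F(K)$, $j^{-1}(Z(Q))$, and $Z(F)$, since well-definedness at every quotient depends on matching these precisely; the conceptual heart is the lemma that an automorphism trivial on $K$ and $Q$ is encoded by a $Z(K)$-valued $1$-cocycle, which is exactly where the normality of $K$ enters.
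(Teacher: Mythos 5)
Your argument is correct: the descent of $B$ modulo $\mathrm{Inn}(F)\mapsto\mathrm{Inn}(Q)$ gives \eqref{malfait1}, the normalization to $\ker(B)$ followed by restriction to $K$ modulo $\psi(Z(Q))$ gives $\Xi$, and the Wells correspondence between automorphisms trivial on $K$ and $Q$ and $Z^1_{\psi}(Q,Z(K))$ (with the inner ones accounting for $\overline{B}^1_{\psi}$) identifies the kernel with $\overline{H}^1_{\psi}(Q,Z(K))$. The paper does not prove this statement but cites \cite[Theorem 4.8]{Mal02}, and your proof is essentially a faithful reconstruction of that Wells--Malfait argument, so no substantive comparison is needed.
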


\section{Discrete subgroups in Lie groups}

In this section we discuss a few group-theoretical properties of lattices in complex Lie groups. Recall that a lattice in a connected Lie group $G$ is a discrete subgroup $\Gamma \subset G$ such that the quotient space $G/\Gamma$ has a $G$-invariant measure with finite volume. A lattice $\Gamma \subset G$ is called cocompact if the quotient $G/\Gamma$ is compact. First of all, we quote the following result from \cite[Theorem 6.15]{Rag72}.

\begin{theorem}\label{fingen} Let $\Gamma$ be a cocompact lattice in a connected Lie group $G$. Then $\Gamma$ is finitely presented.
\end{theorem}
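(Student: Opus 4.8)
The plan is to forget the Lie-group structure as quickly as possible and reduce the statement to a purely topological fact about groups acting on nice spaces. First I would choose a maximal compact subgroup $K \subset G$; by the Cartan--Iwasawa--Malcev theorem such a $K$ exists and $G$ is diffeomorphic to $K \times \mathbb{R}^{d}$, so that $X := G/K$ is diffeomorphic to $\mathbb{R}^{d}$ and in particular is connected, locally contractible and simply connected. The lattice $\Gamma$ acts on $X$ on the left. This action is properly discontinuous: for compact $C \subset X$ its preimage in $G$ is compact (the fibres of $G \to G/K$ are the compact $K$), and $\{\gamma \in \Gamma : \gamma C \cap C \neq \emptyset\}$ is contained in the intersection of $\Gamma$ with a fixed compact subset of $G$, hence is finite by discreteness of $\Gamma$. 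It is also cocompact, since $\Gamma \backslash X$ is a continuous image of the compact space $\Gamma \backslash G$. Crucially I would not assume $\Gamma$ to be torsion free, so the action need not be free; this rules out the quickest argument (pass to the universal cover of the compact manifold $\Gamma\backslash X$ and read off $\Gamma = \pi_1$), and I want an argument insensitive to torsion.

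Next I would establish finite generation by a fundamental-set argument. Cocompactness gives a compact $C$ with $\Gamma C = X$; thickening $C$ I obtain a connected open set $U$ with compact closure and $\Gamma U = X$. Proper discontinuity makes the set $S = \{\gamma \in \Gamma : \gamma U \cap U \neq \emptyset\}$ finite, and connectedness of $X$ shows that $S$ generates $\Gamma$: given $\gamma \in \Gamma$, join a point of $U$ to a point of $\gamma U$ by a path, cover the (compact) path by finitely many translates $\gamma_0 U, \dots, \gamma_k U$ with $\gamma_0 = 1$, $\gamma_k = \gamma$ and consecutive translates overlapping, so that each $\gamma_{i}^{-1}\gamma_{i+1} \in S$ and $\gamma$ is a product of elements of $S$.

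The real content is the passage from finite generation to finite presentation, and here simple connectivity of $X$ enters. I would invoke Macbeath's theorem on groups of homeomorphisms of a simply connected space: with $U$ and $S$ as above, $\Gamma$ admits the presentation with generators $S$ and relations $\gamma_1 \gamma_2 = \gamma_3$ (for $\gamma_1, \gamma_2, \gamma_3 \in S$ with $\gamma_3 = \gamma_1\gamma_2$) arising from the nonempty triple overlaps $U \cap \gamma_1 U \cap \gamma_3 U \neq \emptyset$. Since $S$ is finite there are only finitely many such relations, giving a finite presentation. To apply Macbeath cleanly one needs the translates and their pairwise intersections to be connected; I would arrange this either by building $U$ from the open stars of a sufficiently fine $\Gamma$-invariant triangulation of $X$ (pulled back from a smooth triangulation of $\Gamma\backslash X$), or by using the version of the theorem that simply records the finitely many connected components of each overlap. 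Conceptually, simple connectivity is what guarantees that every relation among the generators is a product of conjugates of these overlap relations: any word representing the identity traces a loop in $X$, which bounds a disk, and the disk is swept out by finitely many of the overlap $2$-cells.

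I expect this presentation step to be the main obstacle, both because it is where simple connectivity is genuinely used and because verifying the connectivity hypotheses needed to apply Macbeath's theorem is the one place requiring care. An alternative route that sidesteps Macbeath is to equip $X$ with a $G$-invariant (hence $\Gamma$-invariant) complete Riemannian metric and apply the Milnor--\v{S}varc lemma, giving that $\Gamma$, with any word metric, is quasi-isometric to $X \cong \mathbb{R}^{d}$; a group acting geometrically on a coarsely simply connected geodesic space is finitely presented, and $\mathbb{R}^{d}$ is coarsely simply connected. Either way the essential point is that $G/K$ is contractible and the $\Gamma$-action is geometric, and that the argument is arranged so as not to require $\Gamma$ to be torsion free — the virtually-torsion-free shortcut via Selberg's lemma is unavailable in general, since a connected Lie group (for example the universal cover of $\mathrm{SL}_2(\mathbb{R})$) and its lattices need not be linear.
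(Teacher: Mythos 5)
The paper gives no proof of this statement; it is quoted verbatim from \cite[Theorem 6.15]{Rag72}. Your argument --- letting $\Gamma$ act properly discontinuously and cocompactly on the contractible space $G/K$ for a maximal compact subgroup $K$, and then applying Macbeath's presentation theorem (or, alternatively, Milnor--\v{S}varc together with coarse simple connectivity of $\mathbb{R}^d$) --- is correct and is essentially the standard proof underlying that citation, including the correct observation that one must avoid any reliance on $\Gamma$ being (virtually) torsion-free or linear.
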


Below are some basic definitions and results on lattices in connected solvable Lie groups. 

\begin{definition} A group $\Gamma$ is called polycyclic if there exists a sequence of subgroups $$\Gamma = \Gamma_0 \supset \Gamma_1 \supset \cdots \supset \Gamma_n = \{id\},$$ such that for all $0 \leqslant i \leqslant n-1$ the subgroup $\Gamma_{i+1}$ is normal in $\Gamma_i$ and the quotient $\Gamma_i/\Gamma_{i+1}$ is a cyclic group.
\end{definition}

Equivalently, polycyclic groups can be defined as solvable groups such that every subgroup is finitely generated \cite[Theorem 1.3]{Hir38}. The class of polycyclic groups is closed under passing to subgroups and quotient groups \cite[Theorem 1.2]{Hir38}.

\begin{proposition} \label{subquot} Subgroups and quotient groups of polycyclic groups are also polycyclic.
\end{proposition}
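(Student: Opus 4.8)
The plan is to prove both assertions directly from the definition by transporting the given cyclic series through intersection (for subgroups) and through projection (for quotient groups), using only the elementary facts that subgroups and quotient groups of a cyclic group are again cyclic, together with the standard isomorphism theorems. So fix a polycyclic group $\Gamma$ equipped with a subnormal series
\[
\Gamma = \Gamma_0 \supset \Gamma_1 \supset \cdots \supset \Gamma_n = \{id\},
\]
where $\Gamma_{i+1} \unlhd \Gamma_i$ and each quotient $\Gamma_i/\Gamma_{i+1}$ is cyclic.

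For a subgroup $H \subset \Gamma$ I would set $H_i = H \cap \Gamma_i$ and verify that $H = H_0 \supset H_1 \supset \cdots \supset H_n = \{id\}$ is again a subnormal series. Normality $H_{i+1} \unlhd H_i$ is immediate from $\Gamma_{i+1} \unlhd \Gamma_i$. The one point requiring attention is cyclicity of the successive quotients: since $H_{i+1} = H_i \cap \Gamma_{i+1}$, the second isomorphism theorem yields an embedding
\[
H_i/H_{i+1} = H_i/(H_i \cap \Gamma_{i+1}) \cong H_i\Gamma_{i+1}/\Gamma_{i+1} \hookrightarrow \Gamma_i/\Gamma_{i+1},
\]
so $H_i/H_{i+1}$ is isomorphic to a subgroup of a cyclic group and is therefore cyclic. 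This shows $H$ is polycyclic.

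For a quotient group $\Gamma/N$ with $N \unlhd \Gamma$, let $\pi \colon \Gamma \to \Gamma/N$ be the projection and put $\bar{\Gamma}_i = \pi(\Gamma_i) = \Gamma_i N/N$. Then $\Gamma/N = \bar{\Gamma}_0 \supset \bar{\Gamma}_1 \supset \cdots \supset \bar{\Gamma}_n = \{id\}$ is a subnormal series, with $\bar{\Gamma}_{i+1} \unlhd \bar{\Gamma}_i$ because the image of a normal subgroup under a surjective homomorphism is normal. Each quotient $\bar{\Gamma}_i/\bar{\Gamma}_{i+1}$ is a quotient of $\Gamma_i/\Gamma_{i+1}$, since $\pi$ induces a surjection $\Gamma_i/\Gamma_{i+1} \to \bar{\Gamma}_i/\bar{\Gamma}_{i+1}$, and a quotient of a cyclic group is cyclic. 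Hence $\Gamma/N$ is polycyclic.

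I do not expect any serious obstacle here: the only step needing care is the verification that the induced quotients $H_i/H_{i+1}$ and $\bar{\Gamma}_i/\bar{\Gamma}_{i+1}$ are cyclic, which in both cases reduces to the isomorphism theorems and the stability of cyclicity under passing to subgroups and quotients. As an alternative route one could instead invoke the cited characterization of polycyclic groups as solvable groups all of whose subgroups are finitely generated: solvability passes to subgroups and quotients, a subgroup of $H \subset \Gamma$ is a subgroup of $\Gamma$ and hence finitely generated, and a subgroup of $\Gamma/N$ has the form $M/N$ with $M \subset \Gamma$ finitely generated, so that $M/N$ is finitely generated as well. Either argument closes the proof.
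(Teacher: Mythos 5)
Your proof is correct and complete. Note that the paper itself gives no proof of this proposition --- it simply cites \cite[Theorem 1.2]{Hir38} --- so there is no argument of the paper's to compare against; your direct transport of the cyclic series (intersecting with a subgroup and using the second isomorphism theorem, projecting to a quotient and using that images of normal subgroups are normal in the image) is the standard self-contained argument and every step checks out. Your alternative route via the characterization of polycyclic groups as solvable groups all of whose subgroups are finitely generated is also valid, but it leans on the nontrivial converse direction of that equivalence (that such a group admits a cyclic series), so the first argument is the one to prefer if a self-contained proof is wanted.
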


The following proposition (see \cite[Proposition 3.7]{Rag72}) says that lattices in simply connected solvable Lie groups are polycyclic with infinite subquotients.

\begin{proposition} \label{latsolv} Let $G$ be a simply connected solvable Lie group and $\Gamma$ a lattice in $G$. Then there exists a sequence of subgroups $$\Gamma = \Gamma_0 \supset \Gamma_1 \supset \cdots \supset \Gamma_n = \{id\},$$ such that for all $0 \leqslant i \leqslant n-1$ the subgroup $\Gamma_{i+1}$ is normal in $\Gamma_i$ and the quotient $\Gamma_i/\Gamma_{i+1}$ is infinite cyclic. Moreover, the number $n$ is equal to the real dimension of $G$.
\end{proposition}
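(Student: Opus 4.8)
The plan is to build the required series by splitting $\Gamma$ along the nilradical of $G$ and handling the nilpotent and abelian pieces separately. Write $N$ for the nilradical of $G$, a closed connected normal subgroup containing $[G,G]$, so that $G/N$ is a simply connected abelian Lie group, i.e. $G/N \cong \mathbb{R}^d$ with $d = \dim(G/N)$. First I would record two easy reductions. Since a simply connected solvable Lie group is diffeomorphic to $\mathbb{R}^n$ and hence has no nontrivial compact (in particular no nontrivial finite) subgroups, its discrete subgroup $\Gamma$ is torsion-free. By Mostow's theorem lattices in solvable Lie groups are automatically cocompact, so I may assume $G/\Gamma$ is compact; this is what will force the ranks to come out maximal in the dimension count below.

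The key structural input, which I would quote from the theory of solvable Lie groups (Mostow, as in Raghunathan, Chapter~3), is that the nilradical is compatible with $\Gamma$: the product $\Gamma N$ is closed in $G$, the intersection $\Gamma \cap N$ is a lattice in $N$, and the image of $\Gamma$ in $G/N$ is discrete. Granting this, $\Gamma/(\Gamma \cap N) \cong \Gamma N/N$ is a discrete cocompact subgroup of $G/N \cong \mathbb{R}^d$ (cocompact because $G/\Gamma N$ is a quotient of the compact $G/\Gamma$), hence isomorphic to $\mathbb{Z}^d$, which visibly admits a series with $d$ infinite cyclic quotients. For the other piece, $\Gamma \cap N$ is a lattice in the simply connected nilpotent Lie group $N$, so by Mal'cev's theory it is a finitely generated torsion-free nilpotent group admitting a central series with all quotients infinite cyclic, of length equal to its Hirsch length $\dim N$. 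Splicing the $\Gamma$-preimages of a flag in $\mathbb{Z}^d$ on top of such a series for $\Gamma \cap N$ produces a subnormal series $\Gamma = \Gamma_0 \supset \cdots \supset \Gamma_d = \Gamma \cap N \supset \cdots \supset \{\mathrm{id}\}$ with every successive quotient infinite cyclic, of total length $\dim N + d = \dim N + \dim(G/N) = \dim G = n$, which is the asserted dimension count.

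The main obstacle is exactly the structural input about the nilradical: that $\Gamma \cap N$ is a lattice in $N$ and that $\Gamma$ projects to a \emph{discrete} subgroup of $G/N$. For an arbitrary closed connected normal subgroup the projection of a lattice need not be discrete, so one genuinely uses both that $N$ is the nilradical and the rigidity properties of solvable lattices. An alternative, more self-contained route would be induction on $\dim G$: one produces a closed connected normal $H \unlhd G$ of codimension one with $\Gamma \cap H$ a lattice in $H$ and $\Gamma/(\Gamma \cap H) \cong \mathbb{Z}$ a lattice in $G/H \cong \mathbb{R}$, applies the inductive hypothesis to $H$, and prepends the infinite cyclic quotient; but the obstacle there is identical, namely producing a codimension-one normal subgroup compatible with $\Gamma$. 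In either approach, once the series with infinite cyclic factors is in hand, $\Gamma$ is in particular polycyclic, and Proposition~\ref{subquot} ensures that the subgroups and quotients appearing in the later arguments remain in this class.
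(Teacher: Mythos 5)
The paper does not prove this proposition at all: it is quoted directly from Raghunathan (\cite[Proposition 3.7]{Rag72}), so there is no in-paper argument to compare against. Your proof is correct and is essentially the standard one found in that reference: all the genuine content is concentrated in Mostow's theorem that the nilradical $N$ is lattice-hereditary (i.e.\ $\Gamma\cap N$ is a lattice in $N$, $\Gamma N$ is closed, and $\Gamma N/N$ is a lattice in $G/N\cong\mathbb{R}^d$), which you correctly identify as the key input rather than something to be taken for granted for an arbitrary closed connected normal subgroup. The remaining steps --- torsion-freeness of $\Gamma$, the identification of the image in $G/N$ with $\mathbb{Z}^d$ by cocompactness, the Mal'cev theory giving a central series with infinite cyclic quotients of length equal to the Hirsch length $\dim N$ for the nilpotent lattice $\Gamma\cap N$, and the splicing that yields total length $\dim N + d = \dim G$ --- are all accurately stated and correctly assembled.
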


The group of outer automorphisms of a polycyclic group is linear over $\mathbb{Z}$ (see \cite[Corollary on p. 543]{We94}).

\begin{theorem} \label{autopoly} Let $\Gamma$ be a polycyclic group. Then there exists $r \in \mathbb{N}$ such that the group $\mathrm{Out}(\Gamma)$ of outer automorphisms of $\Gamma$ is isomorphic to a subgroup of $\mathrm{GL}_r(\mathbb{Z})$.
\end{theorem}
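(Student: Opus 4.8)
The plan is to embed $\mathrm{Out}(\Gamma)$ into an arithmetic group by realizing $\Gamma$ as a Zariski-dense discrete subgroup of a solvable linear algebraic group and exploiting rigidity. Let me first explain why a purely elementary argument is delicate, as this motivates the approach. Since $\Gamma$ is solvable with every subgroup finitely generated, one can refine a characteristic series to obtain characteristic subgroups $\Gamma = \Gamma_0 \supset \cdots \supset \Gamma_k = \{id\}$ whose factors $\Gamma_i/\Gamma_{i+1}$ are each free abelian or finite. Every automorphism preserves the $\Gamma_i$, so the induced actions assemble into a homomorphism $\Aut(\Gamma) \to \prod_i \mathrm{GL}(\Gamma_i/\Gamma_{i+1})$ landing in a product of copies of $\mathrm{GL}_{n_i}(\mathbb{Z})$ together with a finite group, and its kernel is the stability group of the series, which is nilpotent by P.~Hall's stability theorem. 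The obstacle is that $\mathbb{Z}$-linearity is \emph{not} inherited by extensions of abstract groups, so one cannot simply splice the linear image to the (also linear) nilpotent kernel. I would therefore carry out the argument in the category of algebraic groups, where quotients do preserve linearity.

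After a standard reduction (passing to a characteristic torsion-free subgroup of finite index, which is polycyclic by Proposition~\ref{subquot}) I may assume that $\Gamma$ is poly-$\mathbb{Z}$ of Hirsch length $h$. By the theory of the $\mathbb{Q}$-algebraic hull (Mal'cev, Mostow, Raghunathan), such a $\Gamma$ embeds canonically as a Zariski-dense discrete subgroup of the rational points of a connected solvable linear algebraic group $\mathbf{A}$ over $\mathbb{Q}$ with $\dim \mathbf{A} = h$, whose unipotent radical $\mathbf{U}$ meets $\Gamma$ in a lattice of $\mathbf{U}(\mathbb{R})$ and with $\mathbf{A}/\mathbf{U}$ diagonalizable.

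The heart of the proof, and the step I expect to be the main obstacle, is the rigidity statement that every automorphism of $\Gamma$ extends to a $\mathbb{Q}$-algebraic automorphism of $\mathbf{A}$. Uniqueness of the extension is immediate from Zariski density of $\Gamma$ in $\mathbf{A}$; existence is the substantive content and rests on the functoriality of the hull, i.e.\ on the fact that $\mathbf{A}$ is determined by $\Gamma$ in a manner compatible with morphisms. Granting this, $\alpha \mapsto \tilde{\alpha}$ defines an injective homomorphism $\Aut(\Gamma) \hookrightarrow \Aut(\mathbf{A})(\mathbb{Q})$ into the $\mathbb{Q}$-points of the (linear algebraic) automorphism group of $\mathbf{A}$, injectivity following again from Zariski density. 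By construction this map carries $\mathrm{Inn}(\Gamma)$ into the inner automorphisms of $\mathbf{A}$, realized by conjugation by elements of $\mathbf{A}$.

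Finally I would pass to outer automorphisms within the algebraic category. The inner automorphisms form a closed normal subgroup $\mathrm{Inn}(\mathbf{A}) \cong \mathbf{A}/Z(\mathbf{A})$ of the linear algebraic group $\Aut(\mathbf{A})$, and the quotient $\Aut(\mathbf{A})/\mathrm{Inn}(\mathbf{A})$ is again a linear algebraic group over $\mathbb{Q}$; this is precisely where linearity survives the passage to outer automorphisms, in contrast to the abstract-group setting. Thus $\mathrm{Out}(\Gamma)$ injects into the $\mathbb{Q}$-points of this quotient, and its image consists of classes preserving the image of $\Gamma$, equivalently an integral structure on $\mathbf{A}$, so it lies in an arithmetic subgroup. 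Arithmetic groups are $\mathbb{Z}$-linear, and restricting the resulting faithful integral representation yields the required embedding $\mathrm{Out}(\Gamma) \hookrightarrow \mathrm{GL}_r(\mathbb{Z})$ for a suitable $r$. The entire difficulty is concentrated in the rigidity/extension step above; once automorphisms are known to extend to the hull, the linear-algebraic formalism delivers the conclusion with no further essential input.
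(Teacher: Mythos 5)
The paper does not prove Theorem~\ref{autopoly} at all: it is quoted from Wehrfritz \cite{We94}. Your proposal is therefore an independent attempt, and the route you choose --- extend automorphisms to the $\mathbb{Q}$-algebraic hull, then work in the algebraic category --- is essentially the strategy of Baues and Grunewald \cite{BG06}, which the paper records in Remark~\ref{baues} as giving the stronger statement that $\mathrm{Out}(\Gamma)$ is arithmetic. So the approach is viable in principle, but as written it defers its hardest steps to assertions, and at least one of those assertions is false at the level of generality you state it.

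Concretely: (i) The opening reduction is not routine. Passing to a characteristic torsion-free subgroup $\Gamma_0$ of finite index gives a restriction map $\Aut(\Gamma)\to\Aut(\Gamma_0)$, but the image of $\mathrm{Inn}(\Gamma)$ in $\Aut(\Gamma_0)$ is not $\mathrm{Inn}(\Gamma_0)$, so $\mathrm{Out}(\Gamma)$ is neither a subgroup nor a finite extension of $\mathrm{Out}(\Gamma_0)$ in any obvious way; since, as you yourself note, $\mathbb{Z}$-linearity does not pass through extensions, ``I may assume $\Gamma$ is poly-$\mathbb{Z}$'' needs an actual argument. (ii) The claim that $\Aut(\mathbf{A})$, and hence $\Aut(\mathbf{A})/\mathrm{Inn}(\mathbf{A})$, is a linear algebraic group over $\mathbb{Q}$ fails for general connected solvable $\mathbf{A}$: for a torus $\mathbb{G}_m^n$ the automorphism group is the discrete infinite group $\mathrm{GL}_n(\mathbb{Z})$. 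What saves the argument is the special property of the algebraic hull that $C_{\mathbf{A}}(\mathbf{U})\subseteq\mathbf{U}$, which lets one embed the algebraic automorphism group into $\Aut(\mathbf{U})\ltimes(\text{Hom-type data})$ and realize it as a $\mathbb{Q}$-algebraic group; you never invoke this, and without it the ``linearity survives the passage to outer automorphisms'' step has no content. (iii) The final step --- that the stabilizer in $\Aut(\mathbf{A})(\mathbb{Q})$ of the Zariski-dense discrete subgroup $\Gamma$ is an arithmetic subgroup --- is precisely the main theorem of \cite{BG06} and is genuinely difficult; a Zariski-dense polycyclic subgroup is not literally the group of $\mathbb{Z}$-points of $\mathbf{A}$ for an integral structure, so ``preserving $\Gamma$, equivalently an integral structure'' is not an equivalence but the crux of the matter. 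In short, the skeleton is the right one for the arithmeticity theorem, but each of the three joints above is load-bearing and unproved; for the purposes of this paper one either cites \cite{We94} (as the author does) or \cite{BG06}, noting that only boundedness of finite subgroups of $\mathrm{Out}(\Gamma)$ is ultimately needed.
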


\begin{remark}\label{baues} O. Baues and F. Grunewald proved that for any polycyclic-by-finite group $\Gamma$ the group $\mathrm{Out}(\Gamma)$ is an arithmetic group \cite[Theorem 1.1]{BG06}.
\end{remark}

We obtain the following corollary.

\begin{corollary}\label{outerpoly} Let $\Gamma$ be a lattice in a connected solvable Lie group $G$. Then the group $\mathrm{Out}(\Gamma)$ has bounded finite subgroups.
\end{corollary}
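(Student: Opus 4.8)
The plan is to reduce the statement to the fact that $\Gamma$ is polycyclic and then feed this into the results quoted above. Indeed, once we know that $\Gamma$ is polycyclic, Theorem~\ref{autopoly} provides an embedding $\mathrm{Out}(\Gamma) \hookrightarrow \mathrm{GL}_r(\mathbb{Z})$ for some $r \in \mathbb{N}$. By Minkowski's Theorem~\ref{Minkowski} the orders of finite subgroups of $\mathrm{GL}_r(\mathbb{Z})$ are bounded by $M(r)$, and a finite subgroup of $\mathrm{Out}(\Gamma)$ is in particular a finite subgroup of $\mathrm{GL}_r(\mathbb{Z})$; hence $|H| \leqslant M(r)$ for every finite $H \subset \mathrm{Out}(\Gamma)$, which is exactly the assertion. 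Thus the only point requiring an argument is the polycyclicity of $\Gamma$.

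When $G$ is simply connected this is immediate from Proposition~\ref{latsolv}. To treat an arbitrary connected solvable $G$, I would pass to the universal cover $p \colon \tilde{G} \to G$, which is again solvable and now simply connected, with $Z := \ker(p) = \pi_1(G)$ a discrete central subgroup. Setting $\tilde{\Gamma} := p^{-1}(\Gamma)$, the first step is to check that $\tilde{\Gamma}$ is a lattice in $\tilde{G}$: it is discrete because $p$ is a covering map and $\Gamma$ is discrete, and $p$ descends to a homeomorphism $\tilde{G}/\tilde{\Gamma} \to G/\Gamma$ matching the invariant measures, so $\tilde{\Gamma}$ has finite covolume (and is cocompact precisely when $\Gamma$ is). Proposition~\ref{latsolv} then shows that $\tilde{\Gamma}$ is polycyclic. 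Since $Z \subseteq \tilde{\Gamma}$ and $p$ restricts to a surjection $\tilde{\Gamma} \to \Gamma$ with kernel $Z$, we have $\Gamma \cong \tilde{\Gamma}/Z$, and Proposition~\ref{subquot} gives that $\Gamma$, as a quotient of a polycyclic group, is polycyclic.

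I expect the only delicate step to be this reduction from the connected to the simply connected case, specifically the verification that $\tilde{\Gamma} = p^{-1}(\Gamma)$ is again a lattice and that $\Gamma$ is recovered as $\tilde{\Gamma}/Z$; the remaining implications are direct invocations of Theorem~\ref{autopoly}, Theorem~\ref{Minkowski} and Proposition~\ref{subquot}. Alternatively, one could bypass the universal cover by citing the general fact that lattices in connected solvable Lie groups are polycyclic, but the reduction above keeps the argument self-contained within the results already assembled.
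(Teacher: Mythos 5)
Your argument is correct and follows essentially the same route as the paper: pass to the universal cover, observe that $\widetilde{\Gamma} = p^{-1}(\Gamma)$ is a lattice in the simply connected solvable group $\widetilde{G}$ and hence polycyclic by Proposition~\ref{latsolv}, deduce polycyclicity of $\Gamma$ as a quotient via Proposition~\ref{subquot}, and conclude with Theorem~\ref{autopoly} and Minkowski's theorem. The extra care you take in verifying that $\widetilde{\Gamma}$ is a lattice and that $\Gamma \cong \widetilde{\Gamma}/\ker(p)$ is a welcome elaboration of steps the paper leaves implicit, but it is not a different proof.
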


\begin{proof} Consider the universal cover $p \colon \widetilde{G} \to G$. Then $\widetilde{G}$ is also solvable and the preimage $\widetilde{\Gamma} = p^{-1}(\Gamma)$ is a lattice in $\widetilde{G}$, so by Proposition \ref{latsolv} the group $\widetilde{\Gamma}$ it is polycyclic. Since $\Gamma$ is a quotient of $\widetilde{\Gamma}$ by $\mathrm{Ker}(p) \cap \widetilde{\Gamma}$, it is also polycyclic by Proposition \ref{subquot}. Thus the group $\mathrm{Out}(\Gamma)$ has an embedding to $\mathrm{GL}_r(\mathbb{Z})$ for some number $r \in \mathbb{N}$ by Theorem \ref{autopoly}. So by Theorem \ref{Minkowski} the group $\mathrm{Out}(\Gamma)$ has bounded finite subgroups.
\end{proof}

We recall an important result on compatibility of lattices in complex Lie groups with the Levi--Mal'cev decomposition (for the proof, see \cite[~Theorem 3.5.3]{Win98} and \cite[~Theorems 4.3, 4.5, 4.7]{VGS88}). 

\begin{proposition} \label{hered} Let $G$ be a connected complex Lie group and let $$1 \to R \to G \to S \to 1$$ be its Levi--Mal'cev decomposition. Let $\Gamma \subset G$ be a cocompact lattice. Then the group $\Gamma \cap R$ is a cocompact lattice in $R$ and the quotient group $\Gamma/(\Gamma \cap R)$ is a cocompact lattice in $S$. 
\end{proposition}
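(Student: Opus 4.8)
The plan is to isolate a single topological fact and derive everything else from it by compactness arguments. Write $\pi\colon G\to S$ for the projection with kernel $R$. The key claim is that the subset $\Gamma R$ is closed in $G$, equivalently that the image $\pi(\Gamma)\subset S$ is discrete. Granting this, both assertions of the proposition follow formally, so I would treat the closedness of $\Gamma R$ as the heart of the matter and everything else as bookkeeping.

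First I would extract what cocompactness gives for free. Set $H=\overline{\pi(\Gamma)}\subset S$. Since every $\gamma\in\Gamma$ satisfies $\pi(\gamma)\in H$, the composite $G\xrightarrow{\pi}S\to S/H$ is constant on right $\Gamma$-cosets and hence descends to a continuous surjection $G/\Gamma\twoheadrightarrow S/H$. As $G/\Gamma$ is compact, $S/H$ is compact, so $H$ is a closed cocompact subgroup of $S$; moreover its identity component $H^0$, being characteristic in $H$, is normalized by the dense subgroup $\pi(\Gamma)$ and therefore by all of $H$.

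The crux is to upgrade this to $H^0=\{e\}$, i.e.\ to the discreteness of $\pi(\Gamma)$ (equivalently the closedness of $\Gamma R$). This is exactly the content of the compatibility theorem of Auslander and Mostow for the radical, and it is where the solvable structure of $R$ must be played off against the semisimplicity of $S$; I expect this to be the main obstacle. In the complex-analytic setting I would invoke the cited results of Winkelmann and of Vinberg--Gorbatsevich--Shvartsman rather than reprove it.

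Granting that $\Gamma R$ is closed, I would finish as follows. For the radical, consider the left $R$-action on $G/\Gamma$: the orbit of the base point is $R\Gamma/\Gamma\cong R/(R\cap\Gamma)$, and since $\Gamma R=R\Gamma$ is closed this orbit is a closed subset of the compact space $G/\Gamma$, hence compact. As $R\cap\Gamma$ is automatically discrete (being a subset of the discrete group $\Gamma$) and now cocompact in $R$, it is a cocompact lattice in $R$, the finiteness of covolume being automatic since a Lie group carrying a cocompact discrete subgroup is unimodular; its polycyclicity is moreover visible through Proposition \ref{latsolv}. For the quotient, $\Gamma/(\Gamma\cap R)\cong\pi(\Gamma)=\Gamma R/R$ is closed in $S$ by hypothesis, and it is discrete because $(\Gamma R)^0=R$: indeed $R\subseteq(\Gamma R)^0$ as $R$ is connected, while $(\Gamma R)^0/R$ is a connected Lie group that is a subquotient of the countable group $\Gamma$, hence trivial. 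Finally $S/\pi(\Gamma)$ is a continuous image of the compact $G/\Gamma$, so $\pi(\Gamma)$ is cocompact, and thus a cocompact lattice in $S$. This completes the reduction; the only nontrivial ingredient is the closedness of $\Gamma R$.
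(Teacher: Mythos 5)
The paper offers no proof of Proposition \ref{hered}: it is quoted verbatim from \cite[Theorem 3.5.3]{Win98} and \cite[Theorems 4.3, 4.5, 4.7]{VGS88}. Your proposal is therefore not in conflict with the paper: you correctly isolate the single nontrivial ingredient, namely that $\Gamma R$ is closed in $G$ (equivalently, that the image $\pi(\Gamma)$ of $\Gamma$ under the projection $\pi\colon G\to S$ is discrete), and you defer precisely that ingredient to the same references, so in substance both you and the paper rest on the same external result. The surrounding bookkeeping you supply is sound: the closed $R$-orbit of the base point in the compact space $G/\Gamma$ is homeomorphic to $R/(R\cap\Gamma)$ (you are implicitly invoking the standard closed-orbit theorem for second countable locally compact groups to pass from a continuous bijection to a homeomorphism, which deserves a word), discreteness plus cocompactness upgrades to ``lattice'' via unimodularity, and the countability argument giving $(\Gamma R)^0=R$ correctly converts closedness of $\Gamma R$ into discreteness of $\pi(\Gamma)$ --- though note that this equivalence is asserted in your first paragraph but only justified in your last, so the logical order would need rearranging in a full write-up. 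One caution on attribution: calling the key step ``the compatibility theorem of Auslander and Mostow for the radical'' is too glib, since --- as the remark following Proposition \ref{hered} in the paper points out, citing \cite{Ge15} --- heredity of lattices with respect to the radical genuinely fails for general real Lie groups; the complex hypothesis (which in particular forces $S$ to have no compact factors) is exactly what the cited results of Winkelmann and Vinberg--Gorbatsevich--Shvartsman exploit, and that is where the real content of the proposition lives.
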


\begin{remark} Note that Proposition \ref{hered} does not hold in general for real Lie groups, see \cite{Ge15} and references therein.
\end{remark}

The following theorem is a consequence of \cite[Theorem XVIII.4.6]{Hoch65} and \cite[Theorem 1.4.2]{VGO90}.

\begin{theorem}\label{linear} Let $G$ be a simply connected complex Lie group. Then there exists an injective homomorphism of complex Lie groups $$\rho \colon G \to \mathrm{GL}_n(\mathbb{C})$$ for some $n \in \mathbb{N}$.
\end{theorem}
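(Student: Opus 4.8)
The plan is to split $G$ into its solvable and semisimple parts via the Levi--Mal'cev decomposition, treat each part with one of the two cited results, and then reassemble a single faithful holomorphic representation. First I would write $G = R \rtimes S$, where $R$ is the radical of $G$ and $S$ is a Levi subgroup (a maximal connected semisimple complex subgroup). Since $G$ is simply connected, both factors are simply connected and $G$ is genuinely their semidirect product; thus $R$ is a simply connected solvable complex Lie group and $S$ is a simply connected semisimple complex Lie group.

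For the semisimple factor, \cite[Theorem 1.4.2]{VGO90} shows that $S$ is linear (in fact algebraic), so it admits a faithful finite-dimensional holomorphic representation $\rho_S \colon S \hookrightarrow \mathrm{GL}_q(\mathbb{C})$, which necessarily detects the finite center of $S$. For the solvable radical, \cite[Theorem XVIII.4.6]{Hoch65} provides a faithful finite-dimensional holomorphic representation $\rho_R \colon R \hookrightarrow \mathrm{GL}_p(\mathbb{C})$. I would emphasize that the latter is the deep input here: the real analogue, that a simply connected solvable \emph{real} Lie group is linear, is false, so the holomorphic structure is genuinely being used, and this is where the real subtlety of the solvable case is absorbed by the cited theorem.

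The remaining, and main, step is to assemble a faithful representation of $G = R \rtimes S$ out of these two ingredients, since linearity is not preserved under extensions in general and $R$ is not a quotient of $G$, so $\rho_R$ cannot simply be pulled back. Concretely, I would compose $\rho_S$ with the projection $\pi \colon G \to G/R \cong S$ to obtain a holomorphic representation $\rho_S \circ \pi$ of $G$ that is faithful on $S$ and trivial on $R$. To see the radical, I would invoke Ado's theorem to get a faithful representation of $\mathfrak{g} = \mathrm{Lie}(G)$ and integrate it, using simple connectedness of $G$, to a holomorphic representation $\rho_0 \colon G \to \mathrm{GL}_n(\mathbb{C})$ with injective differential; then $\ker \rho_0$ is discrete, hence central. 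The representation $\rho = \rho_0 \oplus (\rho_S \circ \pi)$ then has central kernel contained in $R$, and the hard part will be to show that this residual central kernel is trivial. I expect this to be the main obstacle: one must arrange the representations so that the faithful action of $\rho_R$ on the central part of $R$ is visible in $\rho$, for instance by exploiting the conjugation action of $S$ on $R$ to produce an $S$-compatible enlargement of $\rho_R$, or by refining the choice in Ado's theorem so that $\rho_0|_R$ is already faithful. Once the kernel is eliminated, the resulting holomorphic homomorphism is the desired injection $\rho \colon G \hookrightarrow \mathrm{GL}_N(\mathbb{C})$.
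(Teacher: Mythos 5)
The paper does not actually prove Theorem \ref{linear}; it records it as a consequence of \cite[Theorem XVIII.4.6]{Hoch65} and \cite[Theorem 1.4.2]{VGO90}, so your sketch has to stand on its own. Its skeleton is the right one: $G = R \rtimes S$ with both factors closed and simply connected, $S$ linear with finite center, $R$ faithfully representable, Ado's theorem integrated to $\rho_0 \colon G \to \mathrm{GL}_n(\mathbb{C})$ with discrete central kernel, and the observation that $\rho_0 \oplus (\rho_S \circ \pi)$ has kernel $\ker\rho_0 \cap R$, a discrete central subgroup of $R$. But the step you explicitly postpone --- eliminating this residual kernel --- is not a technicality to be arranged later; it is the entire content of the cited theorem of Hochschild, and neither of the two remedies you gesture at is carried out. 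In particular, ``refining the choice in Ado's theorem so that $\rho_0|_R$ is already faithful'' cannot work as stated: a representation integrating a faithful representation of $\mathfrak{g}$ can still kill a discrete central subgroup, and which one it kills is not controlled by the Lie algebra alone. The standard way to finish is to choose the Ado representation so that the nilradical $\mathfrak{n}$ acts by nilpotent endomorphisms; then $\rho_0$ restricted to the closed, simply connected nilradical $N$ has unipotent (hence simply connected) image, so $\rho_0|_N$ is injective and $\ker\rho_0 \cap N = \{e\}$. Since $[G,G] \cap R \subseteq N$, the subgroup $\ker\rho_0 \cap R$ meets $[G,G]$ trivially and therefore injects into the vector group $G/[G,G] \simeq \mathbb{C}^m$ as a discrete subgroup, which finitely many (indeed, one generic) holomorphic characters $G \to \mathbb{C}^m \to \mathbb{C}^*$ separate from the identity; adding these characters to $\rho_0 \oplus (\rho_S \circ \pi)$ yields the faithful representation. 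Some argument of this kind is indispensable; without it the proposal reduces the theorem to its hardest step and stops.

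A second point: your diagnosis of where the difficulty lives is inverted. It is not true that a simply connected solvable \emph{real} Lie group can fail to be linear --- by a classical theorem of Mal'cev, every simply connected solvable Lie group, real or complex, admits a faithful finite-dimensional representation (the familiar non-linear solvable groups, such as the quotient of the Heisenberg group by a lattice in its center, are not simply connected). The genuine real pathology is on the semisimple side: $\widetilde{\mathrm{SL}_2(\mathbb{R})}$ has infinite center contained in its own commutator subgroup, so no character can detect the residual central kernel and the group is not linear. The complex case succeeds precisely because a simply connected complex semisimple group is a linear algebraic group with finite center, and that is the feature your write-up should be isolating rather than the solvable factor.
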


We will need a result on density of lattices in finite-dimensional representations of complex Lie groups (see \cite[Theorem 3.1]{Mos78} and \cite[Theorem 3.4.1]{Win98}). It generalizes a well-known density theorem for lattices in semisimple groups, due to A. Borel.

\begin{theorem}\label{density} Let $G$ be a connected complex Lie group and let $\rho \colon G \to \mathrm{GL}_n(\mathbb{C})$ be a representation. Suppose that $\Gamma \subset G$ is a lattice. Then the closures of $\rho(\Gamma)$ and $\rho(G)$ in Zariski topology coincide with each other.   
\end{theorem}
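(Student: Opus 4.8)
The plan is to prove the equality of Zariski closures by reducing to two standard density phenomena—one for the semisimple quotient, one for the solvable radical—after some normalizations. Write $H = \overline{\rho(G)}$ and $A = \overline{\rho(\Gamma)}$ for the Zariski closures in $\mathrm{GL}_n(\mathbb{C})$; these are complex algebraic subgroups with $A \subseteq H$, and the goal is $A = H$. First I would reduce to the case where $G$ is simply connected: passing to the universal cover $p\colon \widetilde{G}\to G$, the preimage $\widetilde{\Gamma} = p^{-1}(\Gamma)$ is again a lattice (since $\widetilde{G}/\widetilde{\Gamma}\cong G/\Gamma$), and $\rho\circ p$ has the same images on $\widetilde{G}$ and $\widetilde{\Gamma}$ as $\rho$ has on $G$ and $\Gamma$, so the two closures are unchanged. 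By Theorem~\ref{linear} one may then regard $G$ itself as a complex linear group, giving access to algebraic-group techniques.

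Next I would split the problem along the Levi--Mal'cev decomposition $1\to R\to G\to S\to 1$. By Proposition~\ref{hered}, $\Gamma\cap R$ is a lattice in $R$ and $\Gamma/(\Gamma\cap R)$ is a lattice in $S$. The semisimple factor is handled by the classical Borel density theorem: since a complex semisimple Lie group has no compact factors, a lattice in it is Zariski dense in every representation, an invariant line for $\Gamma$ in a projective space being forced to be $G$-invariant by a measure/Furstenberg argument on the finite-volume space $G/\Gamma$. The genuine work is therefore in the radical $R$ and in gluing the two pieces.

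To organize the gluing I would use the normalizer trick. Since $\rho(\Gamma)\subseteq A$, the group $\rho(\Gamma)$ normalizes the identity component $A^0$, hence fixes the point $[\Lambda^{d}\mathfrak{a}]\in\mathbb{P}(\Lambda^{d}\mathfrak{h})$ under $\Lambda^{d}\mathrm{Ad}$, where $\mathfrak{a}=\mathrm{Lie}(A^0)$, $\mathfrak{h}=\mathrm{Lie}(H)$ and $d=\dim A^0$. The orbit map $g\mapsto \Lambda^{d}\mathrm{Ad}(\rho(g))\cdot[\Lambda^{d}\mathfrak{a}]$ is holomorphic, $G$-equivariant, and constant on $\Gamma$-cosets, so it descends to the finite-volume quotient $G/\Gamma$. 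Showing that this descended map is constant—equivalently that $\rho(G)$ also fixes the point, i.e. that $A^0$ is normal in $H$—reduces to the statement that a $\Gamma$-invariant line is $G$-invariant, which for the radical is the heart of the matter.

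This last point is where the complex structure is essential, and it is the main obstacle. For a holomorphic character $\chi\colon R\to\mathbb{C}^{*}$ one writes $\chi=\exp\lambda$ with $\lambda$ a $\mathbb{C}$-linear functional on the abelianized Lie algebra; if $\chi(\Gamma\cap R)$ fails to be Zariski dense in $\mathbb{C}^{*}$ it must be finite, forcing $|\chi|\equiv 1$ on $\Gamma\cap R$, hence (lattices in solvable groups being cocompact) $\mathrm{Re}\,\lambda\equiv 0$ on $R$; but a $\mathbb{C}$-linear functional with vanishing real part vanishes, so $\chi\equiv 1$. The analogous real statement is false, which is exactly why the theorem requires $G$ complex. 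Upgrading this from characters to the general invariant-line statement, and then descending to $H/A^0$—where $\rho(\Gamma)$ now has finite Zariski closure, so an inductive application of the same principle forces $H/A^0$ to be trivial—yields $A^0=H$ and therefore $A=H$. I expect the bookkeeping in passing between $R$, $S$, and the quotient $H/A^0$ (checking at each stage that lattices descend to lattices, via Proposition~\ref{hered}) to be the most delicate routine part, while the conceptual obstacle is the holomorphic rigidity of characters just described.
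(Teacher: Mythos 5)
The paper does not prove Theorem \ref{density}; it is quoted from \cite[Theorem 3.1]{Mos78} and \cite[Theorem 3.4.1]{Win98}, so there is no in-paper argument to compare yours against. Your sketch does locate the two correct mechanisms --- Borel density for the semisimple quotient and the rigidity of holomorphic characters of the radical (which is exactly where the hypothesis that $G$ is \emph{complex} enters) --- and in that sense it is aligned with the proofs in the literature. The character computation itself is correct: a holomorphic character of a simply connected solvable complex group is $\exp\lambda$ with $\lambda$ complex-linear on the abelianization, triviality of $|\chi|$ on a cocompact lattice forces $\mathrm{Re}\,\lambda\equiv 0$ by compactness of $R/(\Gamma\cap R)$, and a $\mathbb{C}$-linear functional with vanishing real part is zero.

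However, the step you describe as ``upgrading this from characters to the general invariant-line statement'' is not a routine upgrade; it is the actual content of the theorem, and your sketch does not supply it. A $\Gamma$-invariant line $L\subset V$ gives a character of the abstract group $\Gamma$, not a holomorphic character of $R$ or $G$, so the computation above does not apply to it directly. The standard way to close this gap is Furstenberg's measure argument: the orbit map $g\mapsto\rho(g)[L]$ descends to $G/\Gamma$, the finite invariant measure pushes forward to a $\rho(G)$-invariant probability measure on $\mathbb{P}(V)$ supported on the orbit closure, and one then needs the lemma that a connected complex Lie subgroup of $\mathrm{GL}_n(\mathbb{C})$ preserving such a measure (and not forced onto a proper projective subspace configuration) is compact, hence trivial, because linear complex groups are Stein and contain no positive-dimensional compact connected complex subgroups. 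Your proposal gestures at this with the normalizer trick but never states the measure-theoretic input that makes the descended map constant. Two smaller points: Proposition \ref{hered} is stated in the paper only for cocompact lattices, whereas Theorem \ref{density} is asserted for arbitrary lattices, so you should either restrict to the cocompact case (which is all the paper uses) or invoke the general lattice-heredity statement for complex Lie groups; and the d\'evissage combining the two halves (e.g. $\overline{\rho(\Gamma)}\supseteq\overline{\rho(\Gamma\cap R)}=\overline{\rho(R)}$, then Borel density applied in $\overline{\rho(G)}/\overline{\rho(R)}$) is left implicit and itself presupposes the full solvable case, not merely the one-dimensional character case.
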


We will also need the following result about centers of lattices in linear complex Lie groups.

\begin{proposition} \label{zsemis} Let $G$ be a connected complex Lie group which admits a faithful linear representation $$\rho \colon G \to \mathrm{GL}_n(\mathbb{C})$$ with Zariski-closed image and let $\Gamma \subset G$ be a lattice. Then there is an equality $$Z(\Gamma) = Z(G) \cap \Gamma.$$ Moreover, if $G$ is semisimple then both $Z(G)$ and $Z(\Gamma)$ are finite.
\end{proposition}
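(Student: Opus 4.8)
The plan is to establish the two inclusions defining the equality separately, and then to deduce the finiteness statement in the semisimple case. The inclusion $Z(G) \cap \Gamma \subseteq Z(\Gamma)$ is immediate, since an element of $\Gamma$ that commutes with every element of $G$ in particular commutes with every element of $\Gamma$. So the content is in the reverse inclusion.

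For the reverse inclusion $Z(\Gamma) \subseteq Z(G) \cap \Gamma$, the key tool is the density theorem (Theorem~\ref{density}). Given $\gamma \in Z(\Gamma)$, I would consider the centralizer of $\rho(\gamma)$ inside $\mathrm{GL}_n(\mathbb{C})$. This centralizer is Zariski-closed, being cut out by the linear equations $A\rho(\gamma) = \rho(\gamma)A$ in $A$, and it contains $\rho(\Gamma)$ because $\gamma$ commutes with all of $\Gamma$. Hence it contains the Zariski closure of $\rho(\Gamma)$, which by Theorem~\ref{density} coincides with the Zariski closure of $\rho(G)$; since $\rho(G)$ is assumed to be Zariski-closed, this closure is $\rho(G)$ itself. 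Thus $\rho(\gamma)$ commutes with all of $\rho(G)$, and faithfulness of $\rho$ yields $\gamma \in Z(G)$. Combined with $\gamma \in \Gamma$ this gives $Z(\Gamma) \subseteq Z(G) \cap \Gamma$, completing the equality.

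For the final assertion, the faithful representation with Zariski-closed image realizes $G$ as a connected semisimple linear algebraic group over $\mathbb{C}$. I would then invoke the standard fact that such a group has finite center: the adjoint representation $\mathrm{Ad}\colon G \to \mathrm{GL}(\mathfrak{g})$ has discrete kernel, since $\mathfrak{g}$ is semisimple and so $\mathrm{ad}$ is injective, and this kernel is exactly $Z(G)$. Being a zero-dimensional closed subgroup of an algebraic group, $Z(G)$ is finite. Consequently $Z(\Gamma) = Z(G) \cap \Gamma$ is contained in a finite group and is therefore finite as well.

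The step requiring the most care is the application of the density theorem. The point is not that commuting with a fixed matrix is a Zariski-closed condition (which is clear), but rather that the hypotheses of a \emph{faithful} representation with \emph{Zariski-closed} image are precisely what let us identify the Zariski closure of $\rho(G)$ with $\rho(G)$ and then transport a commuting relation back from $\mathrm{GL}_n(\mathbb{C})$ to $G$. Once this identification is in place, the passage from ``centralizes $\Gamma$'' to ``centralizes $G$'' is automatic, and the remaining arguments are routine.
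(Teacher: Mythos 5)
Your proposal is correct and follows essentially the same route as the paper: both arguments use Theorem~\ref{density} together with the fact that centralizers are Zariski-closed to upgrade $C_G(\Gamma)$ to $C_G(G) = Z(G)$, and both deduce finiteness of $Z(G)$ in the semisimple case from its being a zero-dimensional closed algebraic subgroup (you merely make explicit, via the adjoint representation, why $Z(G)^0$ is trivial). No gaps.
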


\begin{proof} Consider a faithful linear representation $\rho \colon G \to \mathrm{GL}_n(\mathbb{C})$ with Zariski-closed image. Then we can identify $G$ with $\rho(G)$ and consider $G$ as an algebraic subgroup of $\mathrm{GL}_n(\mathbb{C})$. Since conjugation is an algebraic operation on $G$, we have $$C_{G}(H) = C_{G}(\overline{H})$$ for any subgroup $H \subset G$. By Theorem \ref{density} the subgroup $\Gamma$ is dense in $G$ in Zariski topology. Therefore, we have $$C_{G}(\Gamma) = C_{G}(\overline{\Gamma}) = C_G(G)= Z(G).$$ Intersecting with $\Gamma$, we obtain $$Z(\Gamma) = C_G(\Gamma) \cap \Gamma = Z(G) \cap \Gamma,$$ as desired. 

Suppose now that $G$ is semisimple. Since $Z(G) \subset G$ is a closed algebraic subgroup and $Z(G)^0 = \{e\}$, it is finite. Thus $Z(\Gamma)$ is also finite in this case. 
\end{proof}

\begin{remark}\label{linearity} In Proposition \ref{zsemis} we assume the complex Lie group $G$ to be linear, that is, to admit a faithful linear representation $$\rho \colon G \to \mathrm{GL}_n(\mathbb{C})$$ with image being closed in Zariski topology. This is always the case for complex semisimple Lie groups (see \cite[Theorem 6.3]{VGO90}). Note that by Theorem \ref{linear} and \cite[Lemma 1.11.2]{Win98} simply connected complex Lie groups are linear as complex analytic groups (not necessarily algebraic). The conclusion of Proposition \ref{zsemis} is also true in this case, e.g. by the argument in the proof of Proposition \ref{outer} below.
\end{remark}

\section{Compact parallelizable manifolds}

Recall the structure theorem for compact complex parallelizable manifolds, due to H.-C. Wang \cite[Theorem 1]{Wa54}.

\begin{theorem} \label{wang} Let $X$ be a compact complex parallelizable manifold. Then there exists a connected complex Lie group $G$ and a discrete cocompact subgroup $\Gamma \subset G$ such that $X$ is biholomorphic to the quotient $G/\Gamma$.
\end{theorem}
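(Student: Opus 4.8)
The plan is to recover the complex Lie group $G$ directly from the differential geometry of $X$: the global holomorphic frame furnished by triviality of the tangent bundle spans a finite-dimensional complex Lie algebra of vector fields, and integrating this algebra produces a transitive holomorphic action of a simply connected complex Lie group with discrete stabilizers. To begin, I would fix a global holomorphic frame $\theta_1, \dots, \theta_n$ of the tangent bundle, where $n = \dim_{\mathbb{C}} X$, so that $\theta_1(x), \dots, \theta_n(x)$ is a basis of $T_x X$ for every $x \in X$. Each Lie bracket $[\theta_i, \theta_j]$ is again a global holomorphic vector field, hence can be written uniquely as $[\theta_i, \theta_j] = \sum_k c_{ij}^k \theta_k$ with holomorphic coefficient functions $c_{ij}^k$ on $X$. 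Since $X$ is compact and connected, every holomorphic function on $X$ is constant, so the $c_{ij}^k$ are scalars. Therefore $\mathfrak{g} := \operatorname{span}_{\mathbb{C}}(\theta_1, \dots, \theta_n)$ is an $n$-dimensional complex Lie subalgebra of the Lie algebra of holomorphic vector fields on $X$.

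Next I would integrate this infinitesimal action. A holomorphic vector field on a compact complex manifold is complete, so each element of $\mathfrak{g}$ generates a one-parameter group of biholomorphic automorphisms of $X$. By the integration theorem for Lie algebra actions (Palais), the inclusion $\mathfrak{g} \hookrightarrow \operatorname{Vect}(X)$ integrates to an action $G \times X \to X$ of the simply connected complex Lie group $G$ with Lie algebra $\mathfrak{g}$, whose fundamental vector fields are precisely the $\theta_i$. This action is holomorphic because its generating fields are holomorphic and $G$ carries a compatible complex structure.

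It then remains to identify $X$ with a homogeneous space of $G$. For any $x \in X$ the evaluation map $\mathfrak{g} \to T_x X$, $\xi \mapsto \xi(x)$, is an isomorphism, since the $\theta_i$ form a frame. Hence every $G$-orbit is open, and by connectedness of $X$ the action is transitive. The same evaluation isomorphism shows that the isotropy subalgebra at any point vanishes, so the stabilizer $\Gamma := G_{x_0}$ of a chosen base point $x_0$ is a discrete subgroup of $G$. Consequently the orbit map induces a biholomorphism $G/\Gamma \xrightarrow{\sim} X$, and since $X$ is compact the subgroup $\Gamma$ is cocompact, which is exactly the assertion.

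I expect the main technical point to be the integration step: passing from the infinitesimal action of $\mathfrak{g}$ to a genuine global holomorphic action of the simply connected group $G$. This rests on completeness of the vector fields (guaranteed by compactness of $X$) together with the fact that a finite-dimensional Lie algebra of complete vector fields integrates to an action of the corresponding simply connected group; verifying that this action is holomorphic, and that the resulting quotient presentation is a \emph{biholomorphism} rather than merely a diffeomorphism, is where the argument requires care.
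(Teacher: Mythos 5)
Your argument is correct and is essentially the classical proof of Wang, which the paper simply quotes from \cite[Theorem 1]{Wa54} without reproving: the constancy of the structure functions $c_{ij}^k$ (compactness plus connectedness forcing holomorphic functions to be constant), the integration of the resulting $n$-dimensional complex Lie algebra to a holomorphic action of the simply connected group $G$, and the observation that pointwise bijectivity of the evaluation map $\mathfrak{g} \to T_xX$ yields open orbits (hence transitivity) and discrete isotropy are exactly the standard route. The only point worth polishing is the final identification: the orbit map $G \to X$ descends to a holomorphic bijection $G/\Gamma \to X$ whose differential is an isomorphism everywhere by equivariance, so it is automatically a biholomorphism, and cocompactness of $\Gamma$ then follows from compactness of $X$.
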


\begin{remark}\label{univcover} Let $G$ be a (complex) Lie group and let $$p \colon \widetilde{G} \to G$$ be its universal cover. Then $\widetilde{G}$ has a structure of a (complex) Lie group such that $p$ is a homomorphism of (complex) Lie groups. Let $\Gamma \subset G$ be a lattice and let $X = G/\Gamma$ be the quotient. Then $p^{-1}(\Gamma)$ is a lattice in $\widetilde{G}$ and $X$ is isomorphic to the quotient $\widetilde{G}/p^{-1}(\Gamma)$. Therefore when considering a compact complex parallelizable manifold $X$ we can always choose the Lie group $G$ to be simply connected.
\end{remark}

We denote by $\Aut(X)$ the automorphism group of $X$ and by $\Aut^0(X)$ the connected component of the identity. The quotient group $\Aut(X)/\Aut^0(X)$ is called the group of connected components of $\Aut(X)$. The next theorem of J. Winkelmann gives an explicit description of the automorphism group of a compact parallelizable manifold.

\begin{theorem} \label{auto} Let $G$ be a simply connected complex Lie group. Let $\Gamma \subset G$ be a discrete subgroup and let $X = G/\Gamma$. Assume that $H^0(X, \mathcal{O}_{X}) = \mathbb{C}$. Consider a semidirect product $$G \rtimes \Aut(G; \Gamma)$$ given by the natural action of $\Aut(G; \Gamma)$ on $G$. Then the following assertions hold. \begin{enumerate} \item There exists an isomorphism $$\Aut(X) \simeq \frac{G \rtimes \Aut(G; \Gamma)}{\tau(\Gamma)}.$$ Here the embedding $\tau \colon \Gamma \to G \rtimes \Aut(G; \Gamma)$ is defined by $\gamma \mapsto (\gamma, \mathrm{int}_{\gamma})$, where $\mathrm{int}_{\gamma} \in \Aut(G; \Gamma)$ is the conjugation by $\gamma$. \item The connected component of the identity $\Aut^0(X)$ is isomorphic to $G/(\Gamma \cap Z(G))$. \item The group of connected components $\Aut(X)/\Aut^0(X)$ is isomorphic to $\Aut(G; \Gamma)/\mathrm{Inn}(\Gamma)$. \end{enumerate}
\end{theorem}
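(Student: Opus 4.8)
The plan is to exhibit a single homomorphism $\Phi\colon G\rtimes\Aut(G;\Gamma)\to\Aut(X)$, identify its kernel with $\tau(\Gamma)$, and prove it is surjective; parts (2) and (3) then follow formally. I would first define, for $(g,\varphi)\in G\rtimes\Aut(G;\Gamma)$, the map $\Phi(g,\varphi)\colon x\Gamma\mapsto g\varphi(x)\Gamma$. This is well defined because $\varphi(\Gamma)=\Gamma$, and it is biholomorphic with holomorphic inverse coming from $(g,\varphi)^{-1}$, so it lands in $\Aut(X)$. A direct check against the multiplication $(g_1,\varphi_1)(g_2,\varphi_2)=(g_1\varphi_1(g_2),\varphi_1\varphi_2)$ shows $\Phi$ is a homomorphism. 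To compute the kernel, note that $(g,\varphi)\in\ker\Phi$ iff $x^{-1}g\varphi(x)\in\Gamma$ for all $x\in G$; since $G$ is connected and $\Gamma$ is discrete, the continuous map $x\mapsto x^{-1}g\varphi(x)$ is constant, equal to its value $g$ at $x=e$, forcing $g\in\Gamma$ and $\varphi(x)=g^{-1}xg=\mathrm{int}_g(x)$. Hence $\ker\Phi=\tau(\Gamma)$, and $(G\rtimes\Aut(G;\Gamma))/\tau(\Gamma)$ injects into $\Aut(X)$; it remains to prove $\Phi$ is onto.

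Surjectivity is the heart of the matter, and this is where parallelizability and the hypothesis $H^0(X,\mathcal{O}_X)=\mathbb{C}$ enter. Let $\omega$ be the $\mathfrak{g}$-valued (right-invariant) Maurer--Cartan form on $G$; being invariant under the deck group of right $\Gamma$-translations, it descends to a global holomorphic coframe $\omega^1,\dots,\omega^n$ trivializing $\Omega^1_X$. Given $f\in\Aut(X)$, I would expand $f^*\omega^i=\sum_j a_{ij}\omega^j$ with $a_{ij}\in H^0(X,\mathcal{O}_X)$; the hypothesis forces every $a_{ij}$ to be constant, so $f^*\omega=A\omega$ for a fixed $A\in\mathrm{GL}_n(\mathbb{C})$. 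Differentiating this relation and comparing with the Maurer--Cartan equation shows that $A$ respects the structure constants, i.e. $A\in\Aut(\mathfrak{g})$, which (as $G$ is simply connected) integrates to an automorphism of $G$. Lifting $f$ to a biholomorphism $\tilde{f}$ of the universal cover $G$, the identity $\tilde{f}^*\omega=A\omega$ characterizes $\tilde{f}$ as the composite of a translation with the automorphism integrating $A$; the equivariance $\tilde{f}(x\gamma)=\tilde{f}(x)\sigma(\gamma)$ under the deck group (for some $\sigma\in\Aut(\Gamma)$) then lets me absorb the translation and rewrite $\tilde{f}=L_c\circ\psi$ with $c\in G$ and $\psi\in\Aut(G;\Gamma)$. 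Thus $f=\Phi(c,\psi)$, proving surjectivity and establishing (1). I expect this rigidity step---pinning down an arbitrary biholomorphism from the mere constancy of the coframe transformation---together with the left/right-translation bookkeeping needed to land the automorphism inside $\Aut(G;\Gamma)$, to be the main obstacle.

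For (2), the restriction $g\mapsto\Phi(g,\mathrm{id})$ is the left-translation action of $G$ on $X$, whose kernel is $Z(G)\cap\Gamma$ by the kernel computation above; this yields an injection $G/(Z(G)\cap\Gamma)\hookrightarrow\Aut^0(X)$ of connected complex Lie groups. Since $X$ is compact, $\Aut^0(X)$ is a complex Lie group with Lie algebra $H^0(X,T_X)$, and triviality of $T_X$ together with $H^0(X,\mathcal{O}_X)=\mathbb{C}$ gives $\dim H^0(X,T_X)=n=\dim G$. An injective homomorphism of Lie groups of equal dimension with connected source is an open embedding, so the image is an open, hence the whole, identity component, giving $\Aut^0(X)\simeq G/(Z(G)\cap\Gamma)$. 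For (3), I would pass to quotients: under $\Phi$ the preimage of $\Aut^0(X)=\Phi(G)$ is $G\cdot\tau(\Gamma)$, so the projection $G\rtimes\Aut(G;\Gamma)\to\Aut(G;\Gamma)$ with kernel $G$ descends to an isomorphism $\Aut(X)/\Aut^0(X)\simeq\Aut(G;\Gamma)/\mathrm{Inn}(\Gamma)$, where $\mathrm{Inn}(\Gamma)=\{\mathrm{int}_\gamma:\gamma\in\Gamma\}$ is the image of $\Gamma$ in $\Aut(G;\Gamma)$.
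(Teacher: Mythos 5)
Your proposal is essentially correct, but it does substantially more work than the paper: the paper simply cites Winkelmann \cite[Corollaries 5.2.8 and 5.2.9]{Win98} for assertions (1) and (2) and only proves (3) as a formal consequence, whereas you reconstruct (1) from scratch via the Maurer--Cartan coframe. Your argument for (1) is the standard one and is sound: the kernel computation (connectedness of $G$ against discreteness of $\Gamma$ forces $x \mapsto x^{-1}g\varphi(x)$ to be constantly equal to $g$) is exactly right, and the surjectivity argument via $f^*\omega = A\omega$ with constant $A$ is precisely where the hypothesis $H^0(X, \mathcal{O}_X) = \mathbb{C}$ must enter. One bookkeeping point you wave at but should nail down: left translations do not preserve the right-invariant Maurer--Cartan form on the nose but twist it by $\mathrm{Ad}(c)$, so in the decomposition $\tilde{f} = L_c \circ \psi$ the matrix attached to $\psi$ is $\mathrm{Ad}(c^{-1})A$ rather than $A$; since this still lies in $\Aut(\mathfrak{g})$, the argument goes through.

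Two softer spots. First, in (2) you invoke compactness of $X$ (Bochner--Montgomery plus $\dim H^0(X, T_X) = n$), but the theorem only assumes $\Gamma$ discrete and $H^0(X, \mathcal{O}_X) = \mathbb{C}$. The cleaner route --- and the one implicit in the paper's own proof of (3) --- is to deduce (2) from (1) together with the discreteness of $\Aut(G; \Gamma)$ (Proposition \ref{outer}): the identity component of $(G \rtimes \Aut(G; \Gamma))/\tau(\Gamma)$ is then the image of $G$, which is $G/(G \cap \tau(\Gamma)) = G/(\Gamma \cap Z(G))$. Second, in (3) you quietly redefine $\mathrm{Inn}(\Gamma)$ as the image $\{\mathrm{int}_\gamma : \gamma \in \Gamma\}$ of $\Gamma$ in $\Aut(G; \Gamma)$, which is $\Gamma/(\Gamma \cap Z(G))$; the statement uses the abstract group $\mathrm{Inn}(\Gamma) = \Gamma/Z(\Gamma)$. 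These agree only once one knows $Z(\Gamma) = Z(G) \cap \Gamma$, which is exactly the point the paper addresses via Proposition \ref{zsemis} (and which here also follows from the injectivity of the restriction $\Aut(G; \Gamma) \to \Aut(\Gamma)$ in Proposition \ref{outer}: if $\gamma \in Z(\Gamma)$ then $\mathrm{int}_\gamma$ restricts to the identity on $\Gamma$, hence is the identity on $G$). Neither gap is fatal, but each needs a sentence.
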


\begin{proof} Assertions (1) and (2) are dealt with in \cite[Corollary 5.2.8]{Win98} and \cite[Corollary 5.2.9]{Win98}, respectively. For completeness, we show assertion (3). Consider the embedding of $G$ to $G \rtimes \Aut(G; \Gamma)$. Since $\Aut(G; \Gamma)$ is discrete, the image of $G$ is the connected component of the identity in $G \rtimes \Aut(G; \Gamma)$. By assertion (1) the automorphism group $\Aut(X)$ is isomorphic to the quotient $G \rtimes \Aut(G; \Gamma)/\tau(\Gamma)$. Also, by assertion ~(2) the connected component $\Aut^0(X)$ is isomorphic to $G/(\Gamma \cap Z(G))$. Therefore the quotient group $$\Aut(X)/\Aut^0(X)$$ is isomorphic to the quotient of $\Aut(G; \Gamma)$ by $\tau(\Gamma)/\tau(\Gamma \cap Z(G))$. By Proposition \ref{zsemis} the latter quotient is isomorphic to $\Gamma/Z(\Gamma) = \mathrm{Inn}(\Gamma)$. Therefore we obtain $\Aut(X)/\Aut^0(X) \simeq \Aut(G; \Gamma)/\mathrm{Inn}(\Gamma)$, as desired. 
\end{proof}

We also recall the proof of discreteness of the group $\Aut(G; \Gamma)$ (see \cite[Lemma 5.3.1]{Win98}).

\begin{proposition} \label{outer} Let $G$ be a simply connected complex Lie group. Let $\Gamma \subset G$ be a discrete subgroup and let $X = G/\Gamma$. Assume that $H^0(X, \mathcal{O}_X) = \mathbb{C}$. Then the group $\Aut(G; \Gamma)$ embeds into $\Aut(\Gamma)$. Moreover, the group of connected components $\Aut(X)/\Aut^0(X)$ embeds into $\mathrm{Out}(\Gamma)$.
\end{proposition}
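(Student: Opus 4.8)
The plan is to realize both embeddings through the single restriction homomorphism
$$r \colon \Aut(G; \Gamma) \to \Aut(\Gamma), \qquad \varphi \mapsto \varphi|_{\Gamma},$$
which is well-defined because every $\varphi \in \Aut(G;\Gamma)$ maps $\Gamma$ isomorphically onto itself. The entire proposition reduces to showing that $r$ is injective; the statement about connected components will then follow by passing to quotients.

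To prove injectivity, suppose $\varphi \in \Aut(G;\Gamma)$ satisfies $\varphi|_\Gamma = \mathrm{id}_\Gamma$, and let us deduce $\varphi = \mathrm{id}_G$. Since $G$ is simply connected, Theorem \ref{linear} supplies an injective homomorphism of complex Lie groups $\rho \colon G \to \mathrm{GL}_n(\mathbb{C})$. The key step is to consider the holomorphic map
$$\psi \colon G \to \mathrm{GL}_n(\mathbb{C}), \qquad \psi(g) = \rho(\varphi(g))\,\rho(g)^{-1},$$
and to observe that it is right-$\Gamma$-invariant: for $\gamma \in \Gamma$ one has $\varphi(g\gamma) = \varphi(g)\varphi(\gamma) = \varphi(g)\gamma$, whence
$$\psi(g\gamma) = \rho(\varphi(g))\rho(\gamma)\rho(\gamma)^{-1}\rho(g)^{-1} = \psi(g).$$
Thus each of the $n^2$ matrix entries of $\psi$ is a right-$\Gamma$-invariant holomorphic function on $G$, i.e.\ a holomorphic function on $X = G/\Gamma$. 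By the hypothesis $H^0(X, \mathcal{O}_X) = \mathbb{C}$ these entries are constant, and evaluation at $g = e$ gives $\psi \equiv \rho(e)\rho(e)^{-1} = \mathrm{Id}$. Therefore $\rho \circ \varphi = \rho$, and injectivity of $\rho$ forces $\varphi = \mathrm{id}_G$. This establishes the first assertion, $\Aut(G;\Gamma) \hookrightarrow \Aut(\Gamma)$.

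For the second assertion I would first invoke Theorem \ref{auto}(3) to identify $\Aut(X)/\Aut^0(X)$ with $\Aut(G;\Gamma)/\mathrm{Inn}(\Gamma)$, where $\mathrm{Inn}(\Gamma) \subset \Aut(G;\Gamma)$ is the image of $\Gamma$ under $\gamma \mapsto \mathrm{int}_\gamma$. Under $r$ an automorphism $\mathrm{int}_\gamma$ of $G$ restricts to the inner automorphism $\gamma' \mapsto \gamma\gamma'\gamma^{-1}$ of $\Gamma$, so $r$ carries $\mathrm{Inn}(\Gamma) \subset \Aut(G;\Gamma)$ onto the genuine inner automorphism group $\mathrm{Inn}(\Gamma) \subset \Aut(\Gamma)$. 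Moreover, if $r(\varphi)$ is inner, say $r(\varphi) = r(\mathrm{int}_\gamma)$, then $\varphi$ and $\mathrm{int}_\gamma$ agree on $\Gamma$, and the injectivity of $r$ yields $\varphi = \mathrm{int}_\gamma$; hence $r^{-1}(\mathrm{Inn}(\Gamma)) = \mathrm{Inn}(\Gamma)$. Consequently $r$ descends to an injective homomorphism
$$\Aut(X)/\Aut^0(X) \cong \Aut(G;\Gamma)/\mathrm{Inn}(\Gamma) \hookrightarrow \Aut(\Gamma)/\mathrm{Inn}(\Gamma) = \mathrm{Out}(\Gamma),$$
as required.

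The only nontrivial input is the injectivity of restriction, and its whole force comes from the twist by $\rho(g)^{-1}$ on the right: this is exactly what cancels the $\Gamma$-action --- because $\varphi$ fixes $\Gamma$ --- and converts the a priori non-invariant matrix coefficients of $\rho$ into honest functions on the compact manifold $X$, at which point the hypothesis $H^0(X,\mathcal{O}_X) = \mathbb{C}$ rigidifies the situation. I expect this invariance/descent step to be the main point; everything else is formal. The same mechanism, applied to $g \mapsto \rho(hgh^{-1})\rho(g)^{-1}$ for $h \in C_G(\Gamma)$, also re-proves $C_G(\Gamma) = Z(G)$ and hence $Z(\Gamma) = Z(G)\cap\Gamma$ without assuming a Zariski-closed image, which is the content alluded to in Remark \ref{linearity}.
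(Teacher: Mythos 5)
Your proof is correct and follows essentially the same route as the paper: the key step in both is that the twisted map $g \mapsto \varphi(g)g^{-1}$ descends to $X$ and is forced to be constant by $H^0(X,\mathcal{O}_X)=\mathbb{C}$ together with the linearity of the simply connected group $G$. The only cosmetic difference is that you apply the faithful representation $\rho$ and argue directly with its matrix coefficients, whereas the paper keeps the map valued in $G$ and invokes the Stein property of $G$ (which comes from the same embedding $\rho$); the passage to $\mathrm{Out}(\Gamma)$ via Theorem \ref{auto}(3) is identical, with your verification that $r^{-1}(\mathrm{Inn}(\Gamma)) = \mathrm{Inn}(\Gamma)$ making explicit a point the paper leaves implicit.
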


\begin{proof} The kernel of the homomorphism $$\Aut(G; \Gamma) \to \Aut(\Gamma)$$ consists of all $\varphi \in \Aut(G; \Gamma)$ such that $\varphi|_{\Gamma} = \mathrm{Id}_{\Gamma}$. Consider the holomorphic map $\zeta \colon G \to G$ defined by $$\zeta(g) = \varphi(g)g^{-1}.$$ The condition $\varphi|_{\Gamma} = \mathrm{Id}_{\Gamma}$ implies that $\zeta(g\gamma) = \zeta(g)$ for every $g \in G$ and $\gamma \in \Gamma$. So the map $\zeta$ induces a holomorphic map from $X$ to $G$. Since the complex Lie group $G$ is simply connected, it is linear by Theorem \ref{linear} and therefore Stein as a complex manifold (see \cite[Corollary 1.11.3]{Win98}). Therefore the assumption $$H^0(X, \mathcal{O}_X) = \mathbb{C}$$ implies that the image of $\zeta$ is equal to $\{e\}$, so $\varphi = \mathrm{Id}_G$. Hence the group $\Aut(G; \Gamma)$ embeds to $\Aut(\Gamma)$. By assertion (3) of Theorem \ref{auto} the group $\Aut(X)/\Aut^0(X)$ is isomorphic to $\Aut(G; \Gamma)/\mathrm{Inn}(\Gamma)$ which then embeds to $\Aut(\Gamma)/\mathrm{Inn}(\Gamma) = \mathrm{Out}(\Gamma)$. 
\end{proof}

We recall a well-known fact about the group of outer automorphisms of a semisimple Lie group (see e.g. \cite[Theorem ~3.3.1]{VGO90}).

\begin{theorem}\label{semisimple} Let $G$ be a semisimple Lie group. Then the group $\mathrm{Out}(G)$ is finite.
\end{theorem}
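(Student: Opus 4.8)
The plan is to reduce the statement to the corresponding fact about the Lie algebra $\mathfrak{g} = \mathrm{Lie}(G)$ and then invoke the structure theory of semisimple Lie algebras. We may assume that $G$ is connected.

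First I would pass to the Lie algebra. Differentiation at the identity gives a homomorphism $d \colon \Aut(G) \to \Aut(\mathfrak{g})$. Since $G$ is connected it is generated by any neighbourhood of the identity, and the identity $\phi \circ \exp = \exp \circ\, d\phi$ shows that an automorphism $\phi$ with $d\phi = \mathrm{Id}$ fixes a neighbourhood of $e$, hence all of $G$; thus $d$ is injective. Under $d$ an inner automorphism $\mathrm{int}_g$ is sent to $\mathrm{Ad}(g)$, so the image of $\mathrm{Inn}(G)$ is exactly $\mathrm{Ad}(G) = \mathrm{Inn}(\mathfrak{g})$. Moreover, if $d\phi = \mathrm{Ad}(g)$ for some $g$, then $\phi$ and $\mathrm{int}_g$ share the same differential and hence coincide, so the preimage of $\mathrm{Inn}(\mathfrak{g})$ under $d$ is precisely $\mathrm{Inn}(G)$. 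Passing to quotients, $d$ induces an injection
$$\mathrm{Out}(G) = \Aut(G)/\mathrm{Inn}(G) \hookrightarrow \Aut(\mathfrak{g})/\mathrm{Inn}(\mathfrak{g}) = \mathrm{Out}(\mathfrak{g}),$$
so it suffices to prove that $\mathrm{Out}(\mathfrak{g})$ is finite for a semisimple Lie algebra $\mathfrak{g}$.

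For the latter I would use the algebraic structure of $\Aut(\mathfrak{g})$. Since $\mathfrak{g}$ is finite-dimensional, $\Aut(\mathfrak{g})$ is a linear algebraic group (a Zariski-closed subgroup of $\mathrm{GL}(\mathfrak{g})$, cut out by the Jacobi-identity equations), and hence has only finitely many connected components. The key input from structure theory is that for semisimple $\mathfrak{g}$ the inner automorphism group $\mathrm{Inn}(\mathfrak{g})$ coincides with the identity component $\Aut(\mathfrak{g})^{0}$. Granting this, $\mathrm{Out}(\mathfrak{g}) = \pi_0(\Aut(\mathfrak{g}))$ is finite, which completes the argument. Alternatively, I could make the finiteness explicit through the Dynkin diagram: fixing a Cartan subalgebra $\mathfrak{h}$ and a base $\Delta$ of the root system, conjugacy of Cartan and Borel subalgebras under $\mathrm{Inn}(\mathfrak{g})$ ensures every outer class has a representative preserving $(\mathfrak{h}, \Delta)$, whence a homomorphism $\mathrm{Out}(\mathfrak{g}) \to \Aut(\text{Dynkin diagram})$ whose kernel is inner; this realizes $\mathrm{Out}(\mathfrak{g})$ as a subgroup of the (manifestly finite) diagram automorphisms.

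The main obstacle is precisely the structural input above: that $\mathrm{Inn}(\mathfrak{g})$ is the full identity component of $\Aut(\mathfrak{g})$ (equivalently, in the diagram approach, that every automorphism fixing a base pointwise is inner, realized by the maximal torus). This is where semisimplicity of $\mathfrak{g}$ is genuinely used, resting on the conjugacy theorems for Cartan and Borel subalgebras; everything else in the argument is formal.
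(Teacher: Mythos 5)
The paper does not actually prove this statement: Theorem~\ref{semisimple} is quoted as a known fact with a pointer to \cite[Theorem 3.3.1]{VGO90}, so there is no in-text argument to measure yours against. Your proof is correct and is essentially the standard argument underlying that reference: inject $\mathrm{Out}(G)$ into $\mathrm{Out}(\mathfrak{g})$ via differentiation (valid since $G$ is connected), then observe that $\Aut(\mathfrak{g})$ is an algebraic group with finitely many components whose identity component is $\mathrm{Inn}(\mathfrak{g})$. Two small points of precision. First, for the route you actually follow, the structural input is not the conjugacy theorems but the fact that every derivation of a semisimple Lie algebra is inner (Whitehead's first lemma, or nondegeneracy of the Killing form): this gives $\mathrm{Lie}(\Aut(\mathfrak{g})) = \mathrm{Der}(\mathfrak{g}) = \mathrm{ad}(\mathfrak{g})$ and hence $\Aut(\mathfrak{g})^{0} = \mathrm{Inn}(\mathfrak{g})$; the conjugacy theorems are what the Dynkin-diagram variant needs. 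Second, since the statement is phrased for arbitrary (real) semisimple Lie groups, finiteness of $\pi_{0}(\Aut(\mathfrak{g}))$ in the real case rests on Whitney's theorem that the real points of an algebraic group have finitely many Euclidean connected components, and the Dynkin-diagram alternative requires modification for non-split real forms; your main line of argument, however, goes through verbatim in both the real and the complex settings, which covers every use of the theorem made in the paper.
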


J. Winkelmann obtained the following proposition as a corollary of Theorem \ref{semisimple} and assertion (3) of Theorem \ref{auto} (see \cite[Theorem 5.3.5]{Win98}). We do not use this result later on. Instead, in Sections 5 -- 7 below we study in detail the groups $\mathrm{Out}(\Gamma)$ for cocompact lattices $\Gamma$ in arbitrary complex Lie groups.

\begin{proposition} \label{outsemi} Let $G$ be a complex semisimple Lie group. Let $\Gamma \subset G$ a cocompact lattice and denote $X = G/\Gamma$. Then the group $\Aut(X)/\Aut^0(X)$ is finite.
\end{proposition}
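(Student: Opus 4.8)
The plan is to reduce everything to the description of the component group in assertion (3) of Theorem \ref{auto} and then to play the finiteness of $\mathrm{Out}(G)$ (Theorem \ref{semisimple}) off against the finiteness of an index. By Remark \ref{univcover} I may replace $G$ by its universal cover, which is again complex semisimple and now simply connected, and replace $\Gamma$ by its preimage, which is again a cocompact lattice; since $X = G/\Gamma$ is compact and connected, one has $H^0(X, \mathcal{O}_X) = \mathbb{C}$, so the hypotheses of Theorem \ref{auto} hold. Hence $\Aut(X)/\Aut^0(X) \simeq \Aut(G; \Gamma)/\mathrm{Inn}(\Gamma)$, and it suffices to prove that this quotient is finite.

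First I would restrict the canonical projection $\Aut(G) \to \mathrm{Out}(G)$ to the subgroup $\Aut(G; \Gamma)$. Its kernel is $\Aut(G;\Gamma) \cap \mathrm{Inn}(G)$, and its image is a subgroup of $\mathrm{Out}(G)$, which is finite by Theorem \ref{semisimple}; thus the index $[\Aut(G;\Gamma) : \Aut(G;\Gamma) \cap \mathrm{Inn}(G)]$ is finite. Note that $\mathrm{Inn}(\Gamma)$, regarded as the group $\{\mathrm{int}_{\gamma} \mid \gamma \in \Gamma\}$ of conjugations of $G$ by elements of $\Gamma$, is a normal subgroup of $\Aut(G;\Gamma)$, since $\varphi \circ \mathrm{int}_{\gamma} \circ \varphi^{-1} = \mathrm{int}_{\varphi(\gamma)}$ with $\varphi(\gamma) \in \Gamma$, and it lies inside $\Aut(G;\Gamma) \cap \mathrm{Inn}(G)$. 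So it remains to bound the index $[\Aut(G;\Gamma) \cap \mathrm{Inn}(G) : \mathrm{Inn}(\Gamma)]$.

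To that end I identify $\Aut(G;\Gamma) \cap \mathrm{Inn}(G)$ with $N_G(\Gamma)/Z(G)$: an inner automorphism $\mathrm{int}_g$ preserves $\Gamma$ precisely when $g \in N_G(\Gamma)$, and the kernel of $g \mapsto \mathrm{int}_g$ is $Z(G)$. Under this identification $\mathrm{Inn}(\Gamma)$ becomes the image $\Gamma Z(G)/Z(G)$, so the index in question divides $[N_G(\Gamma) : \Gamma]$, and the crux is to show $[N_G(\Gamma):\Gamma] < \infty$. For this I first argue that $N_G(\Gamma)$ is discrete: its identity component is connected and acts on the discrete group $\Gamma$ by conjugation, hence trivially, so $N_G(\Gamma)^0 \subseteq C_G(\Gamma)$; but by the density theorem (Theorem \ref{density}) one has $C_G(\Gamma) = C_G(G) = Z(G)$, which is finite because $G$ is semisimple (Proposition \ref{zsemis}). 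Thus $N_G(\Gamma)^0$ is a finite connected group, hence trivial, and $N_G(\Gamma)$ is discrete. Being a discrete, hence closed, subgroup of $G$ that contains the cocompact lattice $\Gamma$, it is itself a cocompact lattice, and $[N_G(\Gamma):\Gamma]$ is finite (for instance because the image of $N_G(\Gamma)$ in the compact space $G/\Gamma$ is discrete and therefore finite).

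Combining the two finite indices yields $[\Aut(G;\Gamma):\mathrm{Inn}(\Gamma)] < \infty$, so $\Aut(X)/\Aut^0(X)$ is finite. I expect the main obstacle to be the intermediate claim that $N_G(\Gamma)$ is discrete, that is, the identification of $C_G(\Gamma)$ with the finite center $Z(G)$; this is where semisimplicity enters decisively, through Theorem \ref{density} together with the finiteness of $Z(G)$ in Proposition \ref{zsemis}, and it is exactly the step that fails for general complex Lie groups, which is why the later sections require a more elaborate analysis of $\mathrm{Out}(\Gamma)$.
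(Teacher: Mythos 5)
Your proof is correct and follows exactly the route the paper indicates for this statement (which it quotes from Winkelmann without writing out a proof): assertion (3) of Theorem \ref{auto} combined with the finiteness of $\mathrm{Out}(G)$ from Theorem \ref{semisimple}. The one ingredient you had to supply yourself --- that $N_G(\Gamma)$ is discrete and $N_G(\Gamma)/\Gamma$ is finite, obtained from $C_G(\Gamma)=Z(G)$ via the density theorem and the finiteness of $Z(G)$ --- is precisely the argument the paper runs inside the proof of Proposition \ref{centerless}, so your writeup is a faithful completion of the intended proof rather than a genuinely different one.
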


\begin{remark} A. Borel proved in \cite{Bor63} that every complex semisimple Lie group admits a cocompact lattice. So there exist (in fact, only countably many up to isomorphism \cite[Proposition 3.13.2]{Win98}) compact complex parallelizable manifolds $X \simeq G/\Gamma$ with $G$ semisimple.
\end{remark}

\section{Outer automorphisms of lattices in semisimple Lie groups} 

In this section we provide a complete proof of the following statement. 

\begin{theorem}\label{outgamma} Let $\Gamma$ be a cocompact lattice in a connected complex semisimple Lie group $G$. Then the group $\mathrm{Out}(\Gamma)$ is finite.
\end{theorem}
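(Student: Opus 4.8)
The plan is to prove finiteness of $\mathrm{Out}(\Gamma)$ for a cocompact lattice $\Gamma$ in a connected complex semisimple Lie group $G$ by realizing every automorphism of $\Gamma$ as the restriction of an automorphism of $G$, and then invoking the finiteness of $\mathrm{Out}(G)$ from Theorem \ref{semisimple}. The central tool is Mostow--Margulis rigidity: since $G$ is semisimple (and, passing to the simply connected cover as in Remark \ref{univcover}, we may arrange the relevant hypotheses), a lattice $\Gamma$ in $G$ is sufficiently rigid that any abstract isomorphism $\Gamma \to \Gamma$ extends uniquely to a Lie group automorphism of $G$. Concretely, I would first reduce to the case where $G$ is simply connected, and handle the center: by Proposition \ref{zsemis} the center $Z(G)$ is finite, so $Z(\Gamma) = Z(G)\cap\Gamma$ is finite and can be quotiented out without affecting boundedness statements (cf. Remark \ref{quotbfs}).

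The key step is to construct a homomorphism $\mathrm{Aut}(\Gamma) \to \mathrm{Aut}(G)$ whose kernel consists of inner automorphisms, or more precisely a homomorphism $\mathrm{Out}(\Gamma) \to \mathrm{Out}(G)$ with finite kernel. I would invoke strong rigidity in the following form: an automorphism $\varphi \in \mathrm{Aut}(\Gamma)$ induces, via the density of $\Gamma$ in $G$ (Theorem \ref{density}, since $G$ is semisimple hence linear by Remark \ref{linearity}), an automorphism of the Zariski closure, which is all of $G$. Thus $\varphi$ extends to $\widetilde{\varphi} \in \mathrm{Aut}(G)$, and this extension is unique precisely because $\Gamma$ is Zariski-dense: two Lie group automorphisms agreeing on the dense subgroup $\Gamma$ must coincide. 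This gives a well-defined injective map $\mathrm{Aut}(\Gamma) \hookrightarrow \mathrm{Aut}(G)$ (modulo the finite center ambiguity). Passing to outer automorphisms, an inner automorphism $\mathrm{int}_\gamma$ of $\Gamma$ extends to the inner automorphism $\mathrm{int}_\gamma$ of $G$, so the extension map descends to $\mathrm{Out}(\Gamma) \to \mathrm{Out}(G)$. The kernel of this descended map consists of classes of automorphisms of $\Gamma$ extending to inner automorphisms of $G$; I would check that such a $\widetilde{\varphi} = \mathrm{int}_g$ with $g \in G$ must have $g$ in the normalizer $N_G(\Gamma)$, and that $N_G(\Gamma)/Z(\Gamma)\cdot\Gamma$ is finite because $\Gamma$ has finite index in its normalizer (a standard consequence of $\Gamma$ being a lattice: $N_G(\Gamma)$ is again discrete, hence $\Gamma$ has finite index in it).

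The hard part, and the step requiring the most care, is the rigidity input itself: justifying that every abstract automorphism of $\Gamma$ extends to an algebraic (Lie) automorphism of $G$. In the complex semisimple setting this is exactly Mostow strong rigidity, but one must be careful about factors of small dimension or rank where rigidity can fail, and about whether one needs Margulis superrigidity (for higher rank) versus Mostow rigidity. Since the paper advertises Section 5 as expository and self-contained, I expect the intended route may instead be more elementary: use that $\mathrm{Aut}(\Gamma)$ acts on $\Gamma$, pass to the finitely many conjugacy classes or to the action on a suitable cohomological or representation-theoretic invariant, and combine with the finiteness of $\mathrm{Out}(G)$. Either way, once the extension $\mathrm{Out}(\Gamma) \to \mathrm{Out}(G)$ with finite kernel is established, finiteness of $\mathrm{Out}(\Gamma)$ follows immediately from Theorem \ref{semisimple} together with Remark \ref{quotbfs}, since a group that is a finite-kernel extension of the finite group $\mathrm{Out}(G)$ is itself finite.
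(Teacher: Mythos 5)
Your overall strategy for the centerless case coincides with the paper's: use Mostow rigidity to extend automorphisms of $\Gamma$ to automorphisms of $G$, observe that the extension is unique by Zariski density, show the kernel of the induced map $\mathrm{Out}(\Gamma) \to \mathrm{Out}(G)$ is controlled by $N_G(\Gamma)/\Gamma$ (finite because $N_G(\Gamma)$ is discrete and $\Gamma$ is cocompact), and conclude with Theorem \ref{semisimple}. Two local points: first, density does \emph{not} ``induce an automorphism of the Zariski closure'' --- density gives only \emph{uniqueness} of the extension; existence is exactly the content of Mostow's theorem, which you correctly identify elsewhere as the key input. Your worry about low-rank factors is resolvable: a complex semisimple Lie group, viewed as a real group, has no compact and no $\mathrm{PSL}_2(\mathbb{R})$-factors, so Theorem \ref{mostow} applies without exception. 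Second, the reduction you want is to the \emph{adjoint} (centerless) form $G/Z(G)$, not to the simply connected cover: Mostow's theorem as stated requires trivial center, and the simply connected cover generally enlarges the center (and replaces $\Gamma$ by a different group).

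The genuine gap is your treatment of the center. You propose to ``quotient out'' the finite group $Z(\Gamma) = Z(G)\cap\Gamma$ ``without affecting boundedness statements (cf.\ Remark \ref{quotbfs}),'' but that remark concerns quotients of a group with bounded finite subgroups by a finite normal subgroup; it says nothing about how $\mathrm{Out}(\Gamma)$ relates to $\mathrm{Out}(\Gamma/Z(\Gamma))$, and the latter relation is not a quotient map in any obvious sense. Since $Z(\Gamma)$ is characteristic, one does get a homomorphism $\mathrm{Out}(\Gamma) \to \mathrm{Out}(\Gamma')\times\Aut(Z(\Gamma))$ with $\Gamma' = \Gamma/Z(\Gamma)$, but its kernel must be shown finite, and this is where the real work lies. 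The paper does this via Malfait's exact sequence (Theorem \ref{malfait11}): the kernel is a quotient of $H^1(\Gamma', Z(\Gamma)) = \mathrm{Hom}(\Gamma', Z(\Gamma))$ (the action of $\Gamma'$ on the center being trivial), which is finite because $Z(\Gamma)$ is finite and $\Gamma'$ is finitely generated (Theorem \ref{fingen}). Without some argument of this kind --- bounding the automorphisms of $\Gamma$ that act trivially on both $Z(\Gamma)$ and $\Gamma'$ yet are not inner --- your proof only establishes the theorem when $Z(G)$ is trivial.
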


This theorem is well-known, see e.g. \cite[Corollary (8.5)]{Wa72} for a proof in the case when $G$ has no factors isomorphic to $\mathrm{SL}_2(\mathbb{C})$. In general, Theorem \ref{outgamma} follows from the celebrated rigidity theorem of Mostow, as mentioned e. g. in \cite{Cor07}. However, since we consider lattices in semisimple Lie groups with nontrivial center, we have chosen to provide a detailed argument here.

First, we state the rigidity theorem of Mostow \cite[Theorem 24.1]{Mos73}.

\begin{theorem} \label{mostow} Let $G$ and $G'$ be connected real semisimple Lie groups having trivial center and no compact or $\mathrm{PSL}_2(\mathbb{R})$-factors. Let $\Gamma \subset G$ and $\Gamma' \subset G'$ be discrete cocompact subgroups. Then for every isomorphism of groups $$\theta \colon \Gamma \to \Gamma'$$ there exists an isomorphism of Lie groups $$\overline{\theta} \colon G \to G'$$ such that $\theta$ is the restriction of $\overline{\theta}$ to $\Gamma$.
\end{theorem}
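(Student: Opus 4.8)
The plan is to follow Mostow's original geometric strategy: pass from the abstract isomorphism $\theta$ to the associated Riemannian symmetric spaces and analyze the induced map on the boundary at infinity. The hypotheses are arranged precisely so that this strategy works. Trivial center and no compact factors let us realize $G$ and $G'$ as the identity components of the full isometry groups of symmetric spaces of noncompact type, while the exclusion of $\mathrm{PSL}_2(\mathbb{R})$-factors rules out the single case---the hyperbolic plane---where rigidity genuinely fails (there Teichm\"uller space provides a positive-dimensional family of non-conjugate cocompact lattices).

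First I would set up the geometry. Let $K \subset G$ and $K' \subset G'$ be maximal compact subgroups and form the symmetric spaces $X = G/K$ and $X' = G'/K'$; these are simply connected, nonpositively curved (hence CAT(0)) Hadamard manifolds on which $G$ and $G'$ act by isometries. Since $\Gamma$ is cocompact, it acts properly discontinuously and cocompactly on $X$, so by the Milnor--\v{S}varc lemma the orbit map $\gamma \mapsto \gamma\cdot x_0$ is a quasi-isometry $\Gamma \to X$, and likewise for $\Gamma'$ and $X'$. Composing these two quasi-isometries with $\theta$ produces a quasi-isometry $\phi \colon X \to X'$ that is $\theta$-equivariant in the coarse sense, meaning that $d_{X'}(\phi(\gamma\cdot x), \theta(\gamma)\cdot\phi(x))$ is uniformly bounded. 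This converts an algebraic problem about $\theta$ into a geometric problem about $\phi$.

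The next step is to push $\phi$ to the boundaries at infinity. Using that quasi-geodesics in a symmetric space of noncompact type stay within bounded distance of genuine geodesics (a Morse-type lemma valid in this nonpositively curved setting), $\phi$ induces a well-defined homeomorphism $\partial_\infty\phi \colon \partial_\infty X \to \partial_\infty X'$ of visual boundaries, and coarse equivariance passes to the limit as $\partial_\infty\phi(\gamma\cdot\xi) = \theta(\gamma)\cdot\partial_\infty\phi(\xi)$. Everything now hinges on upgrading this merely topological boundary map to one induced by an isometry, and here the argument bifurcates by rank. If $X$ has rank one, then $\partial_\infty X$ is a sphere carrying a natural conformal (Carnot--Carath\'eodory) structure; one shows $\partial_\infty\phi$ is quasiconformal and then invokes ergodicity of the $\Gamma$-action on $\partial_\infty X \times \partial_\infty X$ (equivalently, of the geodesic flow on $\Gamma\backslash X$) to force the quasiconformal map to be conformal almost everywhere, hence a M\"obius map, hence the boundary trace of an isometry $X \to X'$. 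If $X$ has rank at least two, then $\partial_\infty X$ is the spherical Tits building of $G$; I would show that $\phi$ carries maximal flats to within bounded Hausdorff distance of maximal flats, so that $\partial_\infty\phi$ induces an isomorphism of the associated buildings, and then apply Tits' theorem that an isomorphism between the spherical buildings of two semisimple groups of rank $\geq 2$ (handled factor by factor via the de Rham decomposition) is induced by an isomorphism of algebraic groups. Either way one obtains a Lie group isomorphism $\overline\theta \colon G \to G'$.

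Finally I would verify that $\overline\theta$ restricts to $\theta$. By construction, for each $\gamma \in \Gamma$ the isometries $\overline\theta(\gamma)$ and $\theta(\gamma)$ induce the same map on $\partial_\infty X'$, namely $\partial_\infty\phi \circ \gamma \circ (\partial_\infty\phi)^{-1}$; since $G'$ has trivial center and no compact factors, an isometry of $X'$ is determined by its boundary action, whence $\overline\theta(\gamma) = \theta(\gamma)$ for all $\gamma$. The main obstacle is exactly the boundary rigidity of the third step. In rank one this is the quasiconformal-implies-conformal mechanism, which is delicate in the complex and quaternionic hyperbolic cases and in the Cayley plane and requires the differentiation theory of quasiconformal maps on Carnot groups; in higher rank it is the reconstruction of a building isomorphism from a purely coarse-geometric hypothesis, followed by Tits' classification. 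Producing a well-defined equivariant boundary homeomorphism and then extracting genuine rigidity from it is where essentially all the depth of the theorem lies.
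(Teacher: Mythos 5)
The paper does not prove this statement: it is quoted verbatim as \cite[Theorem 24.1]{Mos73}, Mostow's strong rigidity theorem, and is used as a black box in the proof of Proposition \ref{centerless}. So there is no internal proof to compare yours against; the only fair assessment is of your sketch as a freestanding argument for Mostow rigidity.

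Read that way, your text is a recognizable outline of the standard proof (Milnor--\v{S}varc, coarsely equivariant quasi-isometry, boundary map, quasiconformal rigidity in rank one, Tits buildings in higher rank), but it is an outline only: every step where the content of the theorem actually lives --- quasiconformality and the ergodicity argument in rank one, quasi-flat rigidity and the reconstruction of a building isomorphism in higher rank, Tits' classification, and the reduction of a possibly reducible lattice to irreducible pieces --- is named and deferred rather than carried out. There is also one concrete error: the claim that quasi-geodesics in a symmetric space of noncompact type stay within bounded distance of genuine geodesics is false in rank at least $2$ (it already fails in the Euclidean plane, hence in any maximal flat), so in higher rank the boundary homeomorphism cannot be produced by the ``Morse-type lemma'' you invoke. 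Mostow's construction in that case goes through the behaviour of $\phi$ on maximal flats --- precisely the quasi-flat statement you only bring in later --- and what one obtains is an isomorphism of Tits buildings, not a visual-boundary homeomorphism extracted from quasi-geodesic stability. If your intention is to cite rather than reprove Mostow's theorem, that is exactly what the paper does and is the right move here; if the intention is to prove it, the sketch as written does not yet constitute a proof.
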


We use Theorem \ref{mostow} to show that Theorem \ref{outgamma} holds assuming that $G$ has trivial center. 

\begin{proposition}\label{centerless} Let $G$ be a connected complex semisimple Lie group with trivial center and let $\Gamma$ be a cocompact lattice in $G$. Then the group $\mathrm{Out}(\Gamma)$ is finite.
\end{proposition}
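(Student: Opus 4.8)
The plan is to invoke Mostow rigidity (Theorem \ref{mostow}) to show that every abstract automorphism of $\Gamma$ is realized by an automorphism of the ambient Lie group $G$, and then to combine this with the finiteness of $\mathrm{Out}(G)$ (Theorem \ref{semisimple}). The key point is that since $G$ is complex semisimple with trivial center, its underlying real Lie group is again semisimple and centerless. Moreover, the complex structure forbids compact factors, and a complex simple factor, viewed as a real group, has real rank at least $2$ (e.g. $\mathrm{SL}_2(\mathbb{C})$ as a real group has real rank $1$, but is not isogenous to $\mathrm{PSL}_2(\mathbb{R})$); in any case $G$ as a real group has no $\mathrm{PSL}_2(\mathbb{R})$-factors because these factors would have to carry a compatible complex structure. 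So $G$ satisfies the hypotheses of Theorem \ref{mostow}.

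First I would apply Mostow rigidity with $G' = G$ and $\Gamma' = \Gamma$: for every $\theta \in \Aut(\Gamma)$, the isomorphism $\theta \colon \Gamma \to \Gamma$ extends to a unique isomorphism $\overline{\theta} \colon G \to G$ of \emph{real} Lie groups. This yields a map $\Phi \colon \Aut(\Gamma) \to \Aut_{\mathbb{R}}(G)$, where $\Aut_{\mathbb{R}}(G)$ denotes automorphisms of the underlying real Lie group; uniqueness of the extension (which holds because $\Gamma$ is Zariski-dense by Theorem \ref{density}, so two Lie group automorphisms agreeing on $\Gamma$ agree everywhere) makes $\Phi$ a group homomorphism. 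The homomorphism $\Phi$ is injective, since $\overline{\theta} = \mathrm{Id}$ forces $\theta = \mathrm{Id}_{\Gamma}$.

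Next I would show $\Phi$ is compatible with inner automorphisms and descends to outer automorphism groups. An inner automorphism $\mathrm{int}_{\gamma}$ of $\Gamma$ extends to $\mathrm{int}_{\gamma} \in \mathrm{Inn}(G)$, so $\Phi(\mathrm{Inn}(\Gamma)) \subset \mathrm{Inn}(G)$. Identifying $\mathrm{Inn}(G) \simeq G$ (as $Z(G)$ is trivial), and noting that the extension $\overline{\mathrm{int}_{\gamma}}$ must equal $\mathrm{int}_{\gamma}$ by Zariski density, I get that $\Phi$ induces a homomorphism $\mathrm{Out}(\Gamma) \to \Aut_{\mathbb{R}}(G)/\mathrm{Inn}(G) = \mathrm{Out}_{\mathbb{R}}(G)$. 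The remaining obstacle, and the one I expect to require the most care, is to bound $\mathrm{Out}_{\mathbb{R}}(G)$: a priori Theorem \ref{semisimple} gives finiteness of the outer automorphism group of $G$ \emph{as a complex Lie group}, whereas Mostow rigidity only produces real-analytic automorphisms, and the group $\mathrm{Out}_{\mathbb{R}}(G)$ of a complex semisimple $G$ can be strictly larger than $\mathrm{Out}_{\mathbb{C}}(G)$ (it includes complex conjugation on each simple factor). Nonetheless, $\mathrm{Out}_{\mathbb{R}}(G)$ is still finite: for $G$ complex semisimple and centerless, $\Aut_{\mathbb{R}}(G)$ is generated by $\mathrm{Inn}(G)$ together with the finite group of diagram automorphisms and the finitely many semialgebraic operations coming from permuting and conjugating the simple factors, so the real outer automorphism group is a finite extension of the complex one.

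Therefore $\mathrm{Out}(\Gamma)$ embeds into the finite group $\mathrm{Out}_{\mathbb{R}}(G)$, and hence is itself finite, proving the proposition. The essential inputs are Mostow rigidity to pass from abstract to Lie-theoretic automorphisms and the finiteness of the real outer automorphism group of a complex semisimple Lie group; the injectivity and well-definedness of the induced map on outer automorphisms rest on Zariski density of $\Gamma$ (Theorem \ref{density}) together with $Z(G) = \{e\}$.
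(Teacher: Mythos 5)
Your overall strategy is the paper's: use Mostow rigidity to extend each $\theta \in \Aut(\Gamma)$ uniquely to $\overline{\theta} \in \Aut(G)$ (uniqueness via Zariski density of $\Gamma$, Theorem \ref{density}), observe that inner automorphisms go to inner automorphisms, and conclude via finiteness of the outer automorphism group of $G$. Your extra care about real versus complex automorphisms and about checking the hypotheses of Theorem \ref{mostow} (no compact or $\mathrm{PSL}_2(\mathbb{R})$ factors) is reasonable and does not cause problems, since $\mathrm{Out}_{\mathbb{R}}(G)$ is indeed still finite for $G$ complex semisimple and centerless.

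However, there is a genuine gap at the final step: you assert that $\mathrm{Out}(\Gamma)$ \emph{embeds} into $\mathrm{Out}_{\mathbb{R}}(G)$, deducing this from the injectivity of $\Phi \colon \Aut(\Gamma) \to \Aut_{\mathbb{R}}(G)$. Injectivity on the level of $\Aut$ does not pass to the quotients: the kernel of the induced map $\mathrm{Out}(\Gamma) \to \mathrm{Out}_{\mathbb{R}}(G)$ is $\Phi^{-1}(\mathrm{Inn}(G))/\mathrm{Inn}(\Gamma)$, i.e.\ the classes of automorphisms of $\Gamma$ that extend to inner automorphisms of $G$ without being inner on $\Gamma$. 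Any $g \in N_G(\Gamma) \setminus \Gamma$ produces such an automorphism, so this kernel is a quotient of $N_G(\Gamma)/\Gamma$ and is nontrivial in general. The paper closes exactly this gap: it shows that $N^0_G(\Gamma)$ centralizes $\Gamma$, hence by Zariski density $N^0_G(\Gamma) \subset C_G(G) = Z(G) = \{e\}$, so $N_G(\Gamma)$ is discrete; cocompactness of $\Gamma$ then forces $N_G(\Gamma)/\Gamma$ to be finite, and finiteness of $\mathrm{Out}(\Gamma)$ follows from finiteness of both the kernel and the image. You need to add this argument (or some substitute for it) before you can conclude. A minor further point: your parenthetical claim that every complex simple factor has real rank at least $2$ is contradicted by your own example $\mathrm{SL}_2(\mathbb{C})$; fortunately the hypothesis actually needed is only the absence of compact and $\mathrm{PSL}_2(\mathbb{R})$ factors, which you verify correctly.
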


\begin{proof} Let $\theta \colon \Gamma \to \Gamma$ be an automorphism. Since $G$ is a complex semisimple Lie group, Theorem \ref{mostow} applies to $\Gamma$ and gives an automorphism $\overline{\theta} \colon G \to G$ extending $\theta$. Moreover, by Remark \ref{linearity} there exists a faithful linear representation $\rho \colon G \to \mathrm{GL}_n(\mathbb{C})$ and $\rho(\Gamma)$ is dense in $\rho(G)$ by Theorem \ref{density}. Thus the automorphism $\overline{\theta}$ is uniquely determined by $\theta$. Therefore we obtain a homomorphism $\Aut(\Gamma) \to \Aut(G)$ sending $\theta$ to $\overline{\theta}$. The subgroup $\mathrm{Inn}(\Gamma) \subset \Aut(\Gamma)$ maps to $\mathrm{Inn}(G)$, therefore we obtain a homomorphism $$\mathrm{Out}(\Gamma) \to \mathrm{Out}(G),$$ whose kernel consists of outer automorphisms of $\Gamma$ extending to inner automorphisms of $G$. In other words, the kernel of the above homomorphism lies in the quotient $N_G(\Gamma)/\Gamma$ of the normalizer $N_{G}(\Gamma)$ by $\Gamma$. It is a closed subgroup of ~$G$; by discreteness of $\Gamma$ the connected component $N^{0}_{G}(\Gamma)$ centralizes $\Gamma$. Then Theorem \ref{density} implies that $$N^{0}_{G}(\Gamma) \subset C_G(\Gamma) = C_G(\overline{\Gamma}) = C_G(G) = Z(G),$$ and $Z(G)$ is trivial by assumption. Therefore $N_G(\Gamma)$ is a discrete subgroup of $G$. Since $\Gamma \subset G$ is cocompact, the quotient $N_G(\Gamma)/\Gamma$ is finite. The group $\mathrm{Out}(G)$ is also finite by Theorem \ref{semisimple}, so $\mathrm{Out}(\Gamma)$ is finite as well.
\end{proof}

Now we derive Theorem \ref{outgamma} from Proposition \ref{centerless}.

\begin{proof}[Proof of Theorem \ref{outgamma}] Consider the center $Z(G)$ of $G$. Since the Lie group $G$ is semisimple, it is linear (see Remark \ref{linearity}), so $Z(G)$ is finite by Proposition \ref{zsemis}. Moreover, $Z(G)$ is equal to the intersection of all maximal tori in $G$ (see, for example, \cite[Theorem 3.2.8]{VGO90}). Let us denote $G' = G/Z(G)$. Then ~$G'$ is also semisimple and the center of $G'$ is trivial. Consider the subgroup $$\Gamma' = \Gamma/(Z(G) \cap \Gamma) \subset G'$$ Then $\Gamma'$ is a cocompact lattice in $G'$. By Proposition \ref{zsemis} the center of $\Gamma'$ is also trivial, and we have the equality $Z(G) \cap \Gamma = Z(\Gamma)$. Now consider the exact sequence $$1 \to Z(\Gamma) \to \Gamma \to \Gamma' \to 1.$$ Since $Z(\Gamma) \subset \Gamma$ is a characteristic subgroup, one has $\mathrm{Out}(\Gamma; Z(\Gamma)) = \mathrm{Out}(\Gamma)$. We apply Theorem \ref{malfait11} and obtain an exact sequence 
\begin{equation}\label{exactseq}
1 \to H^1(\Gamma', Z(\Gamma)) \to \mathrm{Out}(\Gamma) \to \mathrm{Out}(\Gamma') \times \Aut(Z(\Gamma)).
\end{equation}
The natural homomorphism $\psi \colon \Gamma' \to \Aut(Z(\Gamma))$ is trivial, therefore we have $$H^1(\Gamma', Z(\Gamma)) = \mathrm{Hom}(\Gamma', Z(\Gamma)),$$ and the latter group is finite, since $Z(\Gamma)$ is finite by Proposition \ref{zsemis} and $\Gamma'$ is finitely generated by Theorem \ref{fingen}. The group $\mathrm{Out}(\Gamma')$ is finite by Proposition \ref{centerless}, and $\Aut(Z(\Gamma))$ is finite since $Z(\Gamma)$ is finite. Therefore from the exact sequence \eqref{exactseq} we obtain that $\mathrm{Out}(\Gamma)$ is also finite.
\end{proof}

\section{Application of rigidity to deformations of lattices}

In this section we recall a few facts about the space of embeddings of a cocompact lattice $\Gamma$ to a connected Lie group ~$G$, following \cite{Wei60, Wei62}, see also \cite[I.6]{VGS88}.

\begin{definition} Let $G$ be a connected Lie group and let $\Gamma \subset G$ be a cocompact lattice. We denote by $$\mathrm{Hom}(\Gamma, G)$$ the set of homomorphisms from $\Gamma$ to $G$ with the topology of pointwise convergence. Denote by $\mathcal{R}(\Gamma, G)$ the subset (with induced topology) of those $r \in \mathrm{Hom}(\Gamma, G)$ such that $r$ is injective and $r(\Gamma)$ is a cocompact lattice in $G$.
\end{definition}

The next proposition \cite[p. 370]{Wei60} describes the structure of $\mathrm{Hom}(\Gamma, G)$ and $\mathcal{R}(\Gamma, G)$ as topological spaces.

\begin{proposition}\label{analytic} Let $G$ be a Lie group and let $\Gamma \subset G$ be a cocompact lattice. Then the topological space $\mathrm{Hom}(\Gamma, G)$ is homeomorphic to a closed real-analytic subset of the $s$-fold product $G^s$. Moreover, the subspace $\mathcal{R}(\Gamma, G)$ is open in $\mathrm{Hom}(\Gamma, G)$ in Euclidean topology.
\end{proposition}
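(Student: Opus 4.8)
The plan is to treat the two assertions separately, the first being essentially formal and the second being the real content. For the description of $\mathrm{Hom}(\Gamma, G)$ as a closed real-analytic subset of $G^s$, I would invoke Theorem \ref{fingen}: a cocompact lattice is finitely presented, so fix a presentation
$$\Gamma = \langle \gamma_1, \ldots, \gamma_s \mid w_1, \ldots, w_t \rangle.$$
A homomorphism $r \colon \Gamma \to G$ is uniquely determined by the tuple $(r(\gamma_1), \ldots, r(\gamma_s)) \in G^s$, and by the universal property of a presentation a tuple $(g_1, \ldots, g_s)$ arises from a homomorphism precisely when $w_k(g_1, \ldots, g_s) = e$ for every relation $w_k$. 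Since multiplication and inversion in the Lie group $G$ are real-analytic, each word map $w_k \colon G^s \to G$ is real-analytic, so the common solution set $\bigcap_k w_k^{-1}(e)$ is a closed real-analytic subset of $G^s$. Because $\Gamma$ is generated by $\gamma_1, \ldots, \gamma_s$, pointwise convergence of homomorphisms is equivalent to convergence on these finitely many generators, so the evaluation map $r \mapsto (r(\gamma_1), \ldots, r(\gamma_s))$ is a homeomorphism of $\mathrm{Hom}(\Gamma, G)$ onto this subset.

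For the openness of $\mathcal{R}(\Gamma, G)$ I would argue pointwise. Fix $r_0 \in \mathcal{R}(\Gamma, G)$ and set $\Lambda = r_0(\Gamma)$, which by definition is a cocompact lattice, with $r_0 \colon \Gamma \to \Lambda$ an isomorphism. It suffices to produce a neighborhood of $r_0$ all of whose members are again injective with cocompact lattice image; since the construction will use only that $\Lambda$ is a cocompact lattice and that $r_0$ is an isomorphism onto it, it applies verbatim at every point of $\mathcal{R}(\Gamma, G)$. The two conditions to be propagated to nearby $r$ are injectivity on the one hand, and discreteness together with cocompactness of the image on the other.

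The hard part is exactly the stability of discreteness and cocompactness under small deformation, which is the core of Weil's local rigidity and which I would import from \cite{Wei60} rather than reprove in full. The strategy I would follow uses compactness of $G/\Lambda$: choose a relatively compact open set $\Omega \subset G$ with $\Omega \Lambda = G$; then the finite set $\Phi = \{\gamma \in \Lambda \mid \overline{\Omega}\gamma \cap \overline{\Omega} \neq \emptyset\}$ is finite by discreteness of $\Lambda$ and generates $\Lambda$. For a homomorphism $r$ whose values on the finite generating set are sufficiently close to those of $r_0$, one constructs an $r$-equivariant continuous map $D \colon G \to G$ close to the identity, satisfying $D(g\gamma) = D(g)\, r(\gamma)$; its being a homeomorphism forces $r(\Gamma)$ to act properly discontinuously with compact quotient, yielding discreteness and cocompactness, and injectivity then follows since $r(\gamma) = e$ gives $D(g\gamma) = D(g)$, hence $\gamma = e$.

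I expect this equivariant-map construction to be the main obstacle. The delicate issue is uniformity: one must control the deformation of $r$ on all of $\Gamma$ while perturbing only the finitely many generators, and this is precisely the step where compactness of the quotient $G/\Lambda$ enters in an essential way. Everything else in the proof is formal manipulation of presentations and of the pointwise-convergence topology.
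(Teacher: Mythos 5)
The paper does not actually prove this proposition but quotes it from Weil \cite[p.~370]{Wei60}, and your argument is precisely Weil's: the identification of $\mathrm{Hom}(\Gamma, G)$ with the closed real-analytic solution set of the relations inside $G^s$ via a finite presentation (supplied by Theorem \ref{fingen}), followed by the equivariant-map local-rigidity argument for openness of $\mathcal{R}(\Gamma, G)$. Both halves are correct as sketched, and the genuinely hard analytic step --- constructing the equivariant map $D$ and proving it is a proper surjection --- is correctly isolated and legitimately deferred to the cited reference, exactly as the paper itself does.
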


The connected component $\mathcal{R}_0(\Gamma, G) \subset \mathcal{R}(\Gamma, G)$ containing a fixed embedding $i \colon \Gamma \to G$ is called the space of deformations of $\Gamma$ in $G$. For lattices in complex semisimple Lie groups one has the following description of $\mathcal{R}_0(\Gamma, G)$, due to Weil \cite[Theorem 1]{Wei62}.

\begin{theorem}\label{weil} Let $G$ be a complex semisimple Lie group and let $\Gamma \subset G$ be a cocompact lattice. Then the connected component $\mathcal{R}_0(\Gamma, G)$ consists of embeddings obtained from $i$ by inner automorphisms of $G$.
\end{theorem}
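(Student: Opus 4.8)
The statement is Weil's \emph{local rigidity} theorem, and the plan is to run the standard cohomological argument, using Proposition \ref{analytic} to supply $\mathrm{Hom}(\Gamma, G)$ and $\mathcal{R}(\Gamma, G)$ with their analytic structure. Write $\mathfrak{g}$ for the Lie algebra of $G$, equipped with the $\Gamma$-module structure induced by $\mathrm{Ad} \circ i$. Fixing a finite presentation of $\Gamma$ (available by Theorem \ref{fingen}) with generators $\gamma_1, \dots, \gamma_s$, I would realize $\mathrm{Hom}(\Gamma, G)$ as the analytic subvariety of $G^s$ cut out by the relations and compute its Zariski tangent space at $i$ by differentiating those relations. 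A tangent vector is recorded by the derivatives $u(\gamma_k) \in \mathfrak{g}$ of a one-parameter family $i_t$ with $i_0 = i$; the condition $i_t(\gamma\delta) = i_t(\gamma) i_t(\delta)$ linearizes to the cocycle identity $u(\gamma\delta) = u(\gamma) + \mathrm{Ad}(i(\gamma)) u(\delta)$, so that $T_i \mathrm{Hom}(\Gamma, G) = Z^1(\Gamma, \mathfrak{g})$.

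Next I would identify the conjugation orbit $O_i = \{ c_g \circ i \mid g \in G\}$, where $c_g$ denotes conjugation by $g$, as a smooth submanifold of $\mathrm{Hom}(\Gamma, G)$ and compute its tangent space. Taking $g_t = \exp(tX)$ produces the family $i_t = c_{g_t} \circ i$ whose derivative is the coboundary $\gamma \mapsto X - \mathrm{Ad}(i(\gamma)) X$, so $T_i O_i = B^1(\Gamma, \mathfrak{g})$. Two conjugates $c_g \circ i$ and $c_{g'} \circ i$ coincide precisely when $g^{-1} g' \in C_G(i(\Gamma))$, and by the density Theorem \ref{density} one has $C_G(i(\Gamma)) = C_G(\overline{i(\Gamma)}) = Z(G)$, which is finite; hence $O_i \cong G/Z(G)$ and $\dim O_i = \dim \mathfrak{g}$. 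Likewise density gives $\mathfrak{g}^\Gamma = \mathfrak{g}^G = 0$, so $\dim B^1 = \dim \mathfrak{g}$ as well.

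The crux, and the step I expect to be the genuine obstacle, is Weil's vanishing theorem $H^1(\Gamma, \mathfrak{g}) = 0$ for a cocompact lattice $\Gamma$ in a semisimple group: this is the deep analytic input, proved via harmonic theory and Matsushima--Murakami--type vanishing on the compact locally symmetric space $\Gamma \backslash G / K$, and it is what makes the theorem true. For complex semisimple $G$ the simple factors, viewed as real groups, are never isomorphic to $\mathrm{PSL}_2(\mathbb{R})$ and are never compact, so the vanishing applies without exceptional cases. Granting it, $Z^1 = B^1$, whence $T_i \mathrm{Hom}(\Gamma, G) = B^1 = T_i O_i$.

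Finally I would compare dimensions to conclude. Since $O_i \subseteq \mathrm{Hom}(\Gamma, G)$ is a smooth submanifold, one has $\dim O_i = \dim B^1 = \dim Z^1 \geq \dim_i \mathrm{Hom}(\Gamma, G) \geq \dim O_i$, forcing equalities throughout; hence $\mathrm{Hom}(\Gamma, G)$ is smooth at $i$ of dimension $\dim O_i$ and $O_i$ is open near $i$. As $\mathcal{R}(\Gamma, G)$ is open in $\mathrm{Hom}(\Gamma, G)$ by Proposition \ref{analytic}, the orbit $O_i$ is open in $\mathcal{R}(\Gamma, G)$. The same argument applies verbatim at every $j \in \mathcal{R}(\Gamma, G)$, since $j(\Gamma)$ is again a cocompact lattice and the vanishing theorem holds for it; thus every $G$-conjugation orbit in $\mathcal{R}(\Gamma, G)$ is open. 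These open orbits partition $\mathcal{R}(\Gamma, G)$, so each connected component is a single orbit, and in particular $\mathcal{R}_0(\Gamma, G) = O_i$ is exactly the set of embeddings obtained from $i$ by inner automorphisms of $G$. Alternatively, given the tools already assembled, one could argue via Mostow rigidity (Theorem \ref{mostow}) together with finiteness of $\mathrm{Out}(G)$ (Theorem \ref{semisimple}): each $j \in \mathcal{R}_0(\Gamma, G)$ extends to some $\overline{\theta} \in \Aut(G)$, and a connectedness argument forces its class in the discrete finite group $\mathrm{Out}(G)$ to be trivial, so $\overline{\theta}$, and hence $j$, is inner; the delicate points there are the continuity of $j \mapsto \overline{\theta}$ and the treatment of the center, handled as in the proof of Theorem \ref{outgamma}.
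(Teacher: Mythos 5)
The paper does not actually prove Theorem \ref{weil}: it is quoted directly from Weil \cite[Theorem 1]{Wei62} as an external input, so there is no internal argument to compare yours against. Your outline is the standard derivation of local rigidity and is sound as a reduction: identifying the Zariski tangent space of $\mathrm{Hom}(\Gamma,G)$ at $i$ with $Z^1(\Gamma,\mathfrak{g})$, the tangent space of the conjugation orbit with $B^1(\Gamma,\mathfrak{g})$, using density (Theorem \ref{density}) to compute the stabilizer $Z(G)$ and to get $\mathfrak{g}^{\Gamma}=\mathfrak{g}^{G}=0$, and then running the open-orbit/partition argument at every point of $\mathcal{R}(\Gamma,G)$. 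You are also right that complex simple factors, viewed as real groups, are neither compact nor locally isomorphic to $\mathrm{PSL}_2(\mathbb{R})$, so the vanishing theorem applies with no exceptional cases (in particular $\mathrm{SL}_2(\mathbb{C})$ is covered). The substantive caveat is that the entire content of the theorem is concentrated in the vanishing $H^1(\Gamma,\mathfrak{g})=0$, which you correctly isolate as the deep analytic input but do not prove; as a self-contained proof your proposal is therefore incomplete, although it is no less complete than the paper, which cites the result wholesale. If you were to write this up, two points deserve tightening: the dimension chain needs the inequality $\dim Z^1 \geq \dim_i \mathrm{Hom}(\Gamma,G)$ for the \emph{real-analytic} set supplied by Proposition \ref{analytic} (i.e.\ that the Zariski tangent space bounds the local dimension from above, and that equality forces a manifold point), and you should justify that the orbit $G/Z(G)\to\mathrm{Hom}(\Gamma,G)$ is embedded near $i$ before invoking invariance of domain. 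Your alternative route via Mostow rigidity (Theorem \ref{mostow}) together with finiteness of $\mathrm{Out}(G)$ can be made to work, but as you note the continuity of $j\mapsto\overline{\theta}$ and the passage through the center require genuine care; the cohomological route is the cleaner of the two and is the one Weil actually takes.
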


Moreover, an important application of rigidity of cocompact lattices in complex semisimple Lie groups is the following result (see \cite[Theorem 7.63]{Rag72} and \cite[Theorem (8.4)]{Wa72}).

\begin{theorem}\label{finconn} Let $G$ be a complex semisimple Lie group and let $\Gamma \subset G$ be a cocompact lattice. Then the topological space $\mathcal{R}(\Gamma, G)$ has only finitely many connected components. 
\end{theorem}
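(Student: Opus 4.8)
The plan is to identify the connected components of $\mathcal{R}(\Gamma, G)$ with the orbits of the conjugation action of $\mathrm{Inn}(G)$ and then to bound the number of such orbits using rigidity. By Weil's local rigidity (Theorem \ref{weil}) the connected component of a fixed embedding $i$ equals exactly the orbit $\mathrm{Inn}(G)\cdot i$. Now $\mathrm{Aut}(G)$ acts on $\mathrm{Hom}(\Gamma, G)$ by post-composition, preserves the subset $\mathcal{R}(\Gamma, G)$, carries connected components to connected components, and normalizes $\mathrm{Inn}(G)$; applying a suitable automorphism to the base point, the equality of Theorem \ref{weil} propagates to every point, so each connected component of $\mathcal{R}(\Gamma, G)$ coincides with a single $\mathrm{Inn}(G)$-orbit and distinct inner orbits give distinct components. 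Thus it suffices to prove that $\mathcal{R}(\Gamma, G)$ consists of finitely many $\mathrm{Inn}(G)$-orbits.

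First I would show that $\mathrm{Aut}(G)$ acts on $\mathcal{R}(\Gamma, G)$ with only finitely many orbits. For any $r \in \mathcal{R}(\Gamma, G)$ both $i(\Gamma)$ and $r(\Gamma)$ are cocompact lattices in $G$, and $r \circ i^{-1}$ is an isomorphism between them. Passing to the adjoint group $\overline{G} = G/Z(G)$, which is semisimple with trivial center and, being a complex group, has no compact or $\mathrm{PSL}_2(\mathbb{R})$ factors, Mostow rigidity (Theorem \ref{mostow}) extends the induced isomorphism of the projected lattices to an automorphism of $\overline{G}$. Consequently the projections to $\overline{G}$ of all embeddings in $\mathcal{R}(\Gamma, G)$ lie in a single $\mathrm{Aut}(\overline{G})$-orbit. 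To lift this back to $G$ I would argue exactly as in the proof of Theorem \ref{outgamma}: the center $Z(G)$ is finite by Proposition \ref{zsemis}, the lattice $\Gamma$ is finitely presented by Theorem \ref{fingen}, and two embeddings with $\mathrm{Aut}(\overline{G})$-equivalent projections differ by a homomorphism $\Gamma \to Z(G)$, of which there are only finitely many since $\mathrm{Hom}(\Gamma, Z(G))$ is finite. Hence $\mathcal{R}(\Gamma, G)$ meets only finitely many $\mathrm{Aut}(G)$-orbits.

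Finally I would pass from $\mathrm{Aut}(G)$-orbits to $\mathrm{Inn}(G)$-orbits. Since $\mathrm{Inn}(G)$ is a normal subgroup of $\mathrm{Aut}(G)$ with quotient $\mathrm{Out}(G)$, the $\mathrm{Inn}(G)$-orbits inside a given $\mathrm{Aut}(G)$-orbit are indexed by a quotient of $\mathrm{Out}(G)$, so each $\mathrm{Aut}(G)$-orbit splits into at most $|\mathrm{Out}(G)|$ orbits of $\mathrm{Inn}(G)$. The group $\mathrm{Out}(G)$ is finite by Theorem \ref{semisimple}. Combining this with the finiteness of the number of $\mathrm{Aut}(G)$-orbits, the total number of $\mathrm{Inn}(G)$-orbits, and hence of connected components of $\mathcal{R}(\Gamma, G)$, is finite. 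The main obstacle is the presence of the center of $G$: Mostow rigidity is available only for the centerless group $\overline{G}$, so the argument must descend to $\overline{G}$ and then control the finitely many liftings through $Z(G)$, which is precisely the bookkeeping already carried out for $\mathrm{Out}(\Gamma)$ in Theorem \ref{outgamma}.
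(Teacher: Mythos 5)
The paper does not actually prove Theorem \ref{finconn}: it is imported verbatim from Raghunathan and Wang, so there is no in-text argument to compare against. Your proposal reconstructs the standard deduction from Mostow and Weil rigidity, which is also the route taken in those sources, and the overall strategy is sound. Two points in your write-up need repair.

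First, your identification of \emph{every} connected component with a single $\mathrm{Inn}(G)$-orbit is not justified by ``applying a suitable automorphism to the base point'': post-composition by $\mathrm{Aut}(G)$ only reaches the $\mathrm{Aut}(G)$-orbit of $i$, and when $Z(G)\neq\{e\}$ an arbitrary $r\in\mathcal{R}(\Gamma,G)$ need not lie in that orbit, since it may differ from every $\mathrm{Aut}(G)$-translate of $i$ by a nontrivial central twist. The correct way to propagate Theorem \ref{weil} is to apply it to the lattice $r(\Gamma)$ itself, via the homeomorphism $\mathcal{R}(r(\Gamma),G)\simeq\mathcal{R}(\Gamma,G)$ given by precomposition with $r$. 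But in fact the equality of components with orbits is not needed: $G$ is connected and acts continuously, so every $\mathrm{Inn}(G)$-orbit is connected, hence each component is a union of orbits and the number of components is at most the number of $\mathrm{Inn}(G)$-orbits. Weil's theorem can be dropped from the argument entirely.

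Second, in the descent to $\overline{G}=G/Z(G)$ two things should be made explicit. The isomorphism $r\circ i^{-1}$ descends to the projected lattices only because $Z(i(\Gamma))=i(\Gamma)\cap Z(G)$ and likewise for $r$ (Proposition \ref{zsemis}), together with the fact that group isomorphisms preserve centers. More seriously, the automorphism $\Theta$ of $\overline{G}$ produced by Mostow need not lift to $G$, so the assertion that two embeddings with $\mathrm{Aut}(\overline{G})$-equivalent projections ``differ by a homomorphism $\Gamma\to Z(G)$'' is not literally correct as stated. The fix is to observe that inner automorphisms of $\overline{G}$ always lift to $G$ and that $\mathrm{Out}(\overline{G})$ is finite by Theorem \ref{semisimple}; writing $\Theta=\mathrm{inn}_{\bar{g}}\circ\sigma$ with $\sigma$ ranging over a fixed finite set of coset representatives, one obtains directly that the number of $\mathrm{Inn}(G)$-orbits is at most $|\mathrm{Out}(\overline{G})|\cdot|\mathrm{Hom}(\Gamma,Z(G))|$, which is finite since $\Gamma$ is finitely generated (Theorem \ref{fingen}) and $Z(G)$ is finite. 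With these repairs your argument is complete, and it in fact yields an explicit bound on the number of components.
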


Now we apply Theorems \ref{weil} and \ref{finconn} to show that the intersection $\Gamma \cap R$ is invariant by a subgroup of finite index in $\mathrm{Out}(\Gamma)$.

\begin{proposition}\label{finindex} Let $G$ be a connected complex Lie group and let $\Gamma \subset G$ be a cocompact lattice. Then the subgroup $$\mathrm{Out}(\Gamma; \Gamma \cap R) \subset \mathrm{Out}(\Gamma)$$ is of finite index.
\end{proposition}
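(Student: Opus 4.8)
The plan is to show that $\Gamma \cap R$ is \emph{almost} characteristic in $\Gamma$: although it need not be preserved by every automorphism, it sits as a finite-index subgroup of a genuinely characteristic subgroup, and this forces its $\Aut(\Gamma)$-orbit to be finite. Throughout write $\Gamma_R = \Gamma \cap R$ and let $\pi \colon \Gamma \to \Gamma_S := \Gamma/\Gamma_R$ be the quotient map. By Proposition \ref{hered} the group $\Gamma_R$ is a cocompact lattice in the radical $R$, hence polycyclic (Propositions \ref{latsolv} and \ref{subquot}, as in the proof of Corollary \ref{outerpoly}), while $\Gamma_S$ is a cocompact lattice in the semisimple group $S = G/R$. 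Since $R \unlhd G$, the subgroup $\Gamma_R$ is normal in $\Gamma$; in particular $\mathrm{Inn}(\Gamma) \subset \Aut(\Gamma; \Gamma_R)$, so it suffices to prove that $\Aut(\Gamma; \Gamma_R)$ has finite index in $\Aut(\Gamma)$, which yields the statement after passing to the quotient by $\mathrm{Inn}(\Gamma)$.

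First I would identify the correct characteristic subgroup. Set $N_0 = \pi^{-1}(Z(\Gamma_S))$. The claim is that $N_0$ is the unique maximal normal polycyclic subgroup of $\Gamma$, hence characteristic. On one hand, $N_0$ is an extension of the finite group $Z(\Gamma_S)$ (finite by Proposition \ref{zsemis}, using that $S$ is linear by Remark \ref{linearity}) by the polycyclic group $\Gamma_R$, so $N_0$ is polycyclic and normal, with $[N_0 : \Gamma_R] = |Z(\Gamma_S)| < \infty$. On the other hand, if $N \unlhd \Gamma$ is any normal polycyclic subgroup, then $\pi(N)$ is a normal solvable subgroup of $\Gamma_S$; since $\Gamma_S$ is Zariski dense in $S$ (Theorem \ref{density}), the Zariski closure of $\pi(N)$ is normalized by $S$, and being a normal solvable subgroup of the semisimple group $S$ it has trivial identity component and thus is finite and central. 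Hence $\pi(N) \subseteq Z(\Gamma_S)$, i.e.\ $N \subseteq N_0$. This establishes maximality, so $N_0$ is characteristic.

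The counting step then finishes the argument. Fix $\theta \in \Aut(\Gamma)$. Since $N_0$ is characteristic, $\theta(N_0) = N_0$, and therefore $\theta(\Gamma_R)$ is a subgroup of $N_0$ with $[N_0 : \theta(\Gamma_R)] = [N_0 : \Gamma_R] = |Z(\Gamma_S)|$. As $N_0$ is polycyclic it is finitely generated, and a finitely generated group has only finitely many subgroups of any fixed finite index. Hence the set $\{\theta(\Gamma_R) : \theta \in \Aut(\Gamma)\}$ is finite. The group $\Aut(\Gamma)$ acts on this finite set by $\theta \mapsto \theta(\Gamma_R)$, and the stabilizer of $\Gamma_R$ is exactly $\Aut(\Gamma; \Gamma_R)$, which is therefore of finite index. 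Passing to the quotient by $\mathrm{Inn}(\Gamma) \subseteq \Aut(\Gamma;\Gamma_R)$ gives $[\mathrm{Out}(\Gamma) : \mathrm{Out}(\Gamma; \Gamma \cap R)] < \infty$, as required.

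I expect the middle step to be the main obstacle: recognizing that $\Gamma \cap R$ fails to be characteristic only up to the finite center $Z(\Gamma_S)$, and pinning down the characteristic overgroup $N_0$. The one genuinely nonformal ingredient is the density input (Theorem \ref{density}), which rules out exotic normal solvable subgroups of the semisimple lattice $\Gamma_S$; everything else reduces to the elementary finiteness of finite-index subgroups in a finitely generated group. Note that this route uses density rather than the full deformation-rigidity results (Theorems \ref{weil} and \ref{finconn}); an alternative would control the possibilities for $\theta(\Gamma_R)$ through the finiteness of the connected components of $\mathcal{R}(\Gamma_S, S)$, but the density argument above is shorter and more direct.
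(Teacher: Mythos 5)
Your proof is correct, and it takes a genuinely different route from the paper's. The paper fixes the embedding $i \colon \Gamma \to G$, observes that each $\theta \in \Aut(\Gamma)$ yields a point $\pi \circ i \circ \theta$ of the deformation space $\mathcal{R}(\Gamma/(\Gamma \cap R), S)$, and invokes the rigidity results of Section 6 (Theorems \ref{finconn} and \ref{weil}): finitely many connected components, each a single $\mathrm{Inn}(S)$-orbit, so a finite-index subgroup of $\Aut(\Gamma)$ produces embeddings conjugate to $\pi \circ i$, which forces $\theta(\Gamma \cap R) \subseteq \mathrm{Ker}(\pi|_{\Gamma}) = \Gamma \cap R$. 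You bypass deformation theory entirely: by the density theorem (Theorem \ref{density}) the Zariski closure of a normal solvable subgroup of $\Gamma/(\Gamma \cap R)$ is a normal solvable algebraic subgroup of the semisimple group $S$, hence finite and central, so $N_0 = \pi^{-1}(Z(\Gamma/(\Gamma \cap R)))$ is the largest normal polycyclic (indeed, largest normal solvable) subgroup of $\Gamma$ and is therefore characteristic; then every $\theta(\Gamma \cap R)$ sits inside $N_0$ with the fixed finite index $|Z(\Gamma/(\Gamma \cap R))|$, and the finiteness of the set of subgroups of given finite index in the finitely generated group $N_0$ bounds the $\Aut(\Gamma)$-orbit of $\Gamma \cap R$. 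All the ingredients check out: $N_0$ is polycyclic as an extension of a finite abelian group by the polycyclic group $\Gamma \cap R$, and the finiteness of $Z(\Gamma/(\Gamma \cap R))$ is exactly Proposition \ref{zsemis}. Your argument is more economical -- it uses only the density theorem, which the paper already relies on elsewhere, instead of the deeper Weil rigidity theorems -- and it isolates the clean structural fact that $\Gamma \cap R$ fails to be characteristic only up to the finite center of the semisimple quotient lattice; the paper's route, by contrast, keeps the proof within the rigidity framework it has already set up and does not require identifying a characteristic overgroup.
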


\begin{proof} Let us fix an embedding $i \colon \Gamma \to G$ and consider the space $\mathcal{R}(\Gamma, G)$. For every $\theta \in \Aut(\Gamma)$ the composition $i \circ \theta \colon \Gamma \to G$ is an element of $\mathcal{R}(\Gamma, G)$. Let $R$ be the radical of $G$ and denote by $$\pi \colon G \to S = G/R$$ the natural projection. By Proposition \ref{hered} the homomorphisms $\pi \circ i \colon \Gamma \to S$ and $\pi \circ i \circ \theta \colon \Gamma \to S$ induce two embeddings of $\Gamma/(\Gamma \cap R)$ as cocompact lattices in $S$. By Theorem \ref{finconn} the number of connected components of the topological space $$\mathcal{R}(\Gamma/(\Gamma \cap R), S)$$ is finite. Therefore we can take a subgroup $H \subset \Aut(\Gamma)$ of finite index such that for every $\theta \in H$ the embeddings $\pi(\Gamma)$ and $\pi \circ \theta(\Gamma)$ lie in the same connected component of $\mathcal{R}(\Gamma/(\Gamma \cap R), S)$. Then by Theorem ~\ref{weil} the embeddings $\pi(\Gamma)$ and $\pi \circ \theta(\Gamma)$ are conjugate by an element of $S$, that is, $$\pi(\Gamma) = s\cdot \pi \circ \theta(\Gamma)\cdot s^{-1}$$ for some $s \in S$. Suppose that there exist elements $\theta \in H$ and $\gamma \in \Gamma \cap R$ such that $$\theta(\gamma) \notin \Gamma \cap R = \mathrm{Ker}(\pi|_{\Gamma}).$$ Then we have $$e = \pi(\gamma) = s\cdot\pi \circ \theta(\gamma) \cdot s^{-1} \neq e,$$ where $e$ denotes the identity element in $S$. This is a contradiction, so the subgroup $\Gamma \cap R \subset \Gamma$ is preserved by $H$. Consider the image of $H$ under the projection $\Aut(\Gamma) \to \mathrm{Out}(\Gamma)$. It is a subgroup of finite index in $\mathrm{Out}(\Gamma)$ preserving the subgroup $\Gamma \cap R$. Hence, $\mathrm{Out}(\Gamma; \Gamma \cap R)$ has finite index in $\mathrm{Out}(\Gamma)$.
\end{proof}

\section{Main results}

This section is devoted to the proofs of our main results. First, we prove Theorem \ref{main1}.

\begin{proof}[Proof of Theorem \ref{main1}] Consider the Levi--Mal'cev decomposition $$1 \to R \to G \to S \to 1,$$ where $R$ is the radical of $G$ and $S = G/R$ is semisimple. Then there is an induced exact sequence of discrete groups $$1 \to \Gamma \cap R \to \Gamma \to \Gamma/(\Gamma \cap R) \to 1,$$ where by Proposition \ref{hered} the group $\Gamma \cap R$ is a cocompact lattice in $R$ and $\Gamma/(\Gamma \cap R)$ is a cocompact lattice in $S$.

Let us show that the group $\mathrm{Out}(\Gamma; \Gamma \cap R)$ has bounded finite subgroups. We denote by $$B \colon \Aut(\Gamma; \Gamma \cap R) \to \Aut(\Gamma/\Gamma \cap R)$$ the natural homomorphism. By Proposition \ref{malfait} there is an exact sequence $$1 \to \Lambda_B \to \mathrm{Out}(\Gamma; \Gamma \cap R) \to \frac{\mathrm{Im}(B)}{\mathrm{Inn}(\Gamma/(\Gamma \cap R))} \to 1,$$
where the quotient group $$\mathrm{Im}(B)/\mathrm{Inn}(\Gamma/(\Gamma \cap R))$$ embeds to $\mathrm{Out}(\Gamma/(\Gamma \cap R))$. The latter group is finite by Theorem \ref{outgamma}. Thus it suffices to prove that $\Lambda_B$ has bounded finite subgroups. To do this, we consider the exact sequence \eqref{malfait2} from Theorem ~\ref{malfait}: $$1 \to \overline{H}_{\psi}^1(\Gamma/(\Gamma \cap R), Z(\Gamma \cap R)) \to \Lambda_B \to \Xi \to 1.$$ Since $\Gamma/(\Gamma \cap R) \subset S$ and $\Gamma \cap R \subset R$ are cocompact lattices in Lie groups, they are finitely generated, according to Theorem \ref{fingen}. Moreover, the center $Z(\Gamma \cap R)$ is a finitely generated abelian group, since the group $\Gamma \cap R$ is polycyclic by Proposition \ref{subquot}. Therefore by \cite[Exercises VIII.4.1 and VIII.5.1]{Bro82} the first cohomology group $$H_{\psi}^1(\Gamma/(\Gamma \cap R), Z(\Gamma \cap R))$$ is a finitely generated abelian group. Therefore the group $\overline{H}^1_{\psi}(\Gamma/(\Gamma \cap R), Z(\Gamma \cap R))$ is also finitely generated abelian, in particular (see Example \ref{abelianbfs}), it has bounded finite subgroups. On the other hand, by Theorem \ref{malfait} the quotient group $\Xi$ is a subgroup of $$\Upsilon = \mathrm{Out}(\Gamma \cap R)/\psi(Z(\Gamma/(\Gamma \cap R))),$$ where $\psi \colon \Gamma/(\Gamma \cap R) \to \mathrm{Out}(\Gamma \cap R)$ is the natural homomorphism. By Corollary \ref{outerpoly} the group $\mathrm{Out}(\Gamma \cap R)$ has bounded finite subgroups. Since $\Gamma/(\Gamma \cap R)$ is a lattice in the complex semisimple Lie group $S$, the group $Z(\Gamma/(\Gamma \cap R))$ is finite by Proposition \ref{zsemis}. Therefore by Remark \ref{quotbfs} the group $\Upsilon$ has bounded finite subgroups, hence the same property holds for the groups $\Xi, \Lambda_B$ and $\mathrm{Out}(\Gamma; \Gamma \cap R)$. 

By Proposition ~\ref{finindex} the subgroup $\mathrm{Out}(\Gamma; \Gamma \cap R)$ is of finite index in $\mathrm{Out}(\Gamma)$. Thus by Lemma \ref{bfsindex} the group $\mathrm{Out}(\Gamma)$ has bounded finite subgroups as well. 
\end{proof}

\begin{remark} It would be interesting to know if the conclusion of Theorem \ref{main1} holds for more general classes of groups, for example, for finitely presented subgroups of $\mathrm{GL}_n(\mathbb{C})$. It is known that the automorphism groups of finitely generated linear (over $\mathbb{C}$) groups are virtually torsion-free \cite[Corollary (5.2)]{BL83}, in particular, they have bounded finite subgroups. For the free group $F_r$ on $r$ generators the group $\mathrm{Out}(F_r)$ is virtually torsion-free \cite[Corollary (5.5)]{BL83}. Moreover, the orders of finite subgroups of $\mathrm{Out}(F_n)$ are bounded by 12 for $r = 2$ and by $2^rr!$ for $r \geqslant 3$ (see \cite{WZ94}).

On the other hand, there exist finitely generated subgroups $\Gamma \subset \mathrm{GL}_n(\mathbb{Z})$ such that the groups $\mathrm{Out}(\Gamma)$ have unbounded finite subgroups. Examples of such groups can be obtained by a version of the Rips construction from \cite{HW08}.
\end{remark}

Now we can prove Theorem \ref{main}.

\begin{proof}[Proof of Theorem \ref{main}] By Theorem \ref{wang} the manifold $X$ is isomorphic to the quotient $G/\Gamma$ of a connected complex Lie group $G$ by a cocompact lattice $\Gamma$. Replacing $G$ with its universal cover (see Remark \ref{univcover}), we may assume $G$ to be simply connected. Then by Proposition \ref{outer} we have an embedding $$\Aut(X)/\Aut^0(X) \subset \mathrm{Out}(\Gamma).$$ The group $\mathrm{Out}(\Gamma)$ has bounded finite subgroups by Theorem \ref{main1}, hence, the same is true for the group of connected components $\Aut(X)/\Aut^0(X)$. Since $X$ is compact, the connected component $\Aut^0(X)$ is a complex Lie group (by e. g. \cite[Theorem on p. 144]{Akh95} or by assertion (2) of Theorem \ref{auto}), so by Theorem \ref{liejor} it is Jordan. Thus we can apply Proposition \ref{Extensions} to the exact sequence $$1 \to \Aut^0(X) \to \Aut(X) \to \Aut(X)/\Aut^0(X) \to 1$$ and obtain that the group $\Aut(X)$ is Jordan.
\end{proof}

\begin{remark} Let $X$ be a compact parallelizable manifold. Then the group of bimeromorphic automorphisms $\mathrm{Bim}(X)$ coincides with $\Aut(X)$ (see \cite[Proposition 5.5.1]{Win98}).
\end{remark}

\begin{question} We expect that the methods used in the proofs of Theorems \ref{main} and \ref{main1} can be strenghened to establish other properties of the groups $\Aut(X)/\Aut^0(X)$ and $\mathrm{Out}(\Gamma)$. For instance, it is not clear if the group $\mathrm{Out}(\Gamma)$ is linear over $\mathbb{Z}$ in the general case.
\end{question}

\begin{question} Another intriguing problem is to find an effective upper bound for the Jordan constant of the automorphism group of a compact complex parallelizable manifold $X \simeq G/\Gamma$ in terms of invariants of $G$ and $\Gamma$.
\end{question}

\flushleft{National Research University Higher School of Economics, Russian Federation \\
Laboratory of Mirror Symmetry, NRU HSE\\
6 Usacheva str., Moscow, Russia, 119048 \\
and\\
Steklov Mathematical Institute of Russian Academy of Sciences, Moscow, Russia\\
8 Gubkina St., Moscow, 119991, Russia\\}
\email{golota.g.a.s@mail.ru, agolota@hse.ru}

\end{document}